\newcommand{\be}{\begin{equation}}
\newcommand{\ee}{\end{equation}}
\newcommand{\ben}{\begin{equation*}}
\newcommand{\een}{\end{equation*}}
\newcommand{\ra}{\rightarrow}
\newtheorem{theorem}{Theorem}
\newtheorem*{theoremprime}{Theorem $1'$}
\newtheorem{lemma}{Lemma}
\newtheorem{corollary}{Corollary}
\date{}
\begin{document}
\title {Contribution to the theory of Pitman estimators}
\author{Abram M. Kagan\thanks{Corresponding author},
Tinghui Yu\\
Department of Mathematics, University of Maryland\\ College Park, MD 20742\\
E-mail: amk@math.umd.edu, yuth@math.umd.edu\\
\\
Andrew Barron, Mokshay Madiman\\
Department of Statistics, Yale University\\
New Haven, CT 06511\\
E-mail: andrew.barron@yale.edu, mokshay.madiman@yale.edu}
\maketitle

\begin{abstract}
\noindent
New inequalities are proved for the variance of the Pitman estimators (minimum variance equivariant estimators)
of $\theta$ constructed from samples of fixed size from populations $F(x-\theta)$. The inequalities are closely
related to the classical Stam inequality for the Fisher information, its analog in small samples, and a
powerful variance drop inequality.
The only condition required is finite variance of $F$; 
even the absolute continuity of $F$ is not assumed.
As corollaries of the main inequalities for small samples, one obtains alternate proofs of known properties of the
Fisher information, as well as interesting new observations like the fact that the variance of the Pitman
estimator based on a sample of size $n$ scaled by $n$  monotonically decreases in $n$.
Extensions of the results to the polynomial versions of the Pitman estimators and a multivariate
location parameter are given. Also, the search for characterization of equality conditions for one of
the inequalities leads to a Cauchy-type functional equation for independent random variables,
and an interesting new behavior of its solutions is described.
\end{abstract}
\noindent
{\em  Keywords:} Fisher information, location parameter, monotonicity of the variance, Stam inequality.


\section{Introduction}
\label{sec:intro}

Our goal is to present some new inequalities for the variance of the Pitman estimators
of a location parameter from different related samples.

Denote by $t_n$ the Pitman estimator (i.e., the minimum variance equivariant estimator) of $\theta$ from a sample
$(x_1,\ldots,x_n)$ of size $n$ from population $F(x-\theta)$. For simplicity, we first focus on the univariate case,
i.e., $x_i\in {\mathbb R}$. If $\int x^{2}dF(x)<\infty$, the Pitman estimator
can be written as
\be
t_n=\bar x-E(\bar x|x_1-\bar x,\ldots,x_n-\bar x)\label {eq:2}
\ee
where $\bar x$ is the sample mean and $E$ denotes the expectation with respect to $F(x)$ (i. e., when $\theta =0$).

For the univariate case, if $F'=f$ exists, $t_n$ can be also written as
\be
t_n=\frac{\int u\prod_{1}^{n}f(x_i-u)du}{\int \prod_{1}^{n}f(x_i-u)du} \label{eq:3}
\ee
showing that $t_n$ is a generalized Bayes estimator corresponding to an improper prior (uniform on the whole $\mathbb R$).
In this paper the representation (\ref{eq:3}) crucial in studying the behavior of $t_n$ in large samples will not be used.

In Section~\ref{sec:convolve}, we obtain a relationship between the variances of the Pitman estimators
based on data obtained by adding (convolving) the initial samples.
As an application of this inequality, one obtains a new proof of a Fisher information inequality
related to the central limit theorem.
Another application, to distributed estimation using sensor networks, is described elsewhere \cite{MBKY09}.

If $t_n^{(1)},\ldots,t_n^{(N)}$ denote the Pitman estimators from samples of size $n$ from
$F_{1}(x-\theta),\ldots,F_{N}(x-\theta)$, and $t_n$  is the Pitman estimator from a sample of size $n$ from
$F(x-\theta)$ where $F=F_1\ast \ldots \ast F_N$, Kagan \cite{Kag02} showed the superadditivity property
\be
{\rm var}(t_n)\geq {\rm var}(t_n^{(1)})+\ldots+{\rm var}(t_n^{(N)}).\label{eq:11}
\ee
In Section~\ref{sec:char}, we obtain this as a corollary of the main inequality in Section~\ref{sec:convolve},
and study an analytic problem arising in connection with identifying its equality conditions.
In particular, a version of the classical Cauchy functional equation for independent random variables is studied;
the behavior of this equation turns out to be more subtle than in the usual settings.

In Section~\ref{sec:combine}, various inequalities relevant to estimation from a combination of samples
are given. For instance, for the Pitman estimator $t_{m+n}$ constructed from observations
$x_1,\ldots,x_m,y_1,\ldots,y_n$ where the first $m$ observations come from $F(x-\theta)$ and the last $n$ from
$G(x-\theta)$,
\be
\frac{1}{{\rm var}(t_{m+n})}\geq \frac{1}{{\rm var}(t_{m})}+\frac{1}{{\rm var}(t_{n})}\label{eq:10} ,
\ee
where $t_m$ and $t_n$ denote the Pitman estimators constructed from $x_1,\ldots,x_m$ and $y_1,\ldots,y_n$
respectively. A generalization of this inequality has an interesting application to a data pricing problem (where datasets are
to be sold, and the value of a dataset comes from the information it yields about an unknown location parameter);
this application is described by the authors elsewhere \cite{MBKY09:itw}.

As an application of the inequalities proved in Section~\ref{sec:combine}, we prove in Section~\ref{sec:monot}
that for any $n\geq 1$, with $t_n$ now denoting the Pitman estimator constructed from $x_1,\ldots,x_n$ for any $n$,
\be
n{\rm var}(t_n)\geq (n+1){\rm var}(t_{n+1})\label{eq:9}
\ee
with the equality sign holding for an $n\geq 2$ only for a sample from Gaussian population (in which case
$n{\rm var}(t_n)$ is constant in $n$).

If $(x_1,\ldots,x_n)$ is a sample from $s$-variate population $F(x-\theta),\:x,\:\theta\in {\mathbb R}^s$ with $\int_{\mathbb{R}^s}|x|^2dF(x)<\infty$,
the Pitman estimator is defined as the minimum covariance matrix equivariant estimator. Though there is only partial ordering
in the set of covariance matrices, the set of covariance matrices of equivariant estimators has a minimal element which is
the covariance matrix of the Pitman estimator (\ref{eq:2}) of the $s$-variate location parameter. Multivariate extensions
of most of the inequalities mentioned above are given in Section~\ref{sec:multi}.

Assuming $\int x^{2k}dF(x)<\infty $ for some integer $k\geq 1$, the polynomial Pitman
estimator $\hat{t}_{n}^{(k)}$ of degree $k$ is, by definition, the minimum variance equivariant
polynomial estimator (see Kagan \cite{Kag66}). An advantage of the polynomial Pitman
estimator is that it depends only on the first $2k$ moments of $F$. In Section~\ref{sec:poly},
it is shown that the polynomial Pitman estimator preserves almost all the properties of $t_n$
that are studied here.

In Section~\ref{sec:perturb} the setup of observations $x_1,\ldots,x_n$ additively perturbed by independent $y_1,\ldots,y_n$
with self-decomposable distribution function $G(y/\lambda)$ is considered. For  the Pitman estimator $t_{n,\lambda}$ from a sample of size $n$ from
$F_{\lambda}(x-\theta)$ where $F_{\lambda}(x)=\int F(x-u)dG(u/\lambda)$ we prove that ${\rm var}(t_{n,\lambda})$
as a function of $\lambda$, monotonically decreases on $(-\infty,0)$ and increases on $(0,+\infty)$. This
makes rigorous the intuition that adding ``noise'' makes estimation harder.

Section~\ref{sec:discuss} concludes with some discussion of the issues that arise in
considering various possible generalizations of the results presented in this paper.

\subsection{Related literature}

All our results have direct counterparts in terms of the Fisher information, and demonstrate very close
similarities between properties of the inverse Fisher information and the variance of Pitman estimators.

Denote by $I(X)$ the Fisher information on a parameter $\theta\in{\mathbb R}$ contained in an observation $X+\theta$.
Plainly, the information depends only on the distribution $F$ of the noise $X$ but not on $\theta$.

For independent $X,\:Y$ the inequality $I(X+Y)\leq I(X)$ is almost trivial (an observation $X+Y+\theta$ is
``more noisy'' than $X+\theta$). A much less trivial inequality was proved in Stam \cite{Sta59}:
\be
\frac{1}{I(X+Y)}\geq \frac{1}{I(X)}+\frac{1}{I(Y)}. \label{eq:1}
\ee
In Zamir \cite{Zam98}, the Stam inequality is obtained as a direct corollary of the basic properties of the
Fisher information: additivity, monotonicity and reparameterization formula.

The  main inequality in Section~\ref{sec:convolve} is closely related to the classical Stam inequality
for the Fisher information, its version in estimation and a powerful variance
drop inequality proved in a general form in Madiman and Barron \cite{MB07} (described below).
In Artstein {\it et. al.} \cite{ABBN04:1} and Madiman and Barron \cite{MB07} the variance drop inequality led to
improvements of the Stam inequality.

Let now $F(x)=(F_{1}\ast F_{2})(x)=\int F_{1}(y)dF_{2}(x-y)$ and $t'_n,\:t''_n,\;t_n$ be the Pitman estimators from samples of size $n$ from $F_{1}(x-\theta),\:F_{2}(x-\theta)$ and $F(x-\theta)$, respectively. If $\int x^{2}dF(x)<\infty$, the following inequality holds for the variances (Kagan \cite{Kag02}):
\be
{\rm var}(t_n)\geq {\rm var}(t'_n)+{\rm var}(t''_n).\label{eq:4}
\ee
This inequality is, in a sense, a finite sample version of (\ref{eq:1}), as discussed in Kagan \cite{Kag02}.
It is generalized in Section~\ref{sec:convolve}, and its equality conditions are obtained in Section~\ref{sec:char}.

Several of the results in this paper rely on the following {\it variance drop lemma}.

\begin{lemma}\label{lem:vardrop}
Let $X_1,\ldots,X_N$ be independent (not necessarily identically distributed) random vectors.
For ${\bf s}=\{i_1,\ldots,i_m\}\subset \{1,\ldots,N\}$ set ${\bf X_s}=(X_{i_1},\ldots,X_{i_m})$,
with $i_1<i_2<\ldots<i_m$ without loss of generality. For
arbitrary functions $\phi_{\bf s}({\bf X_s})$ with ${\rm var}\{\phi_{\bf s}({\bf X_s})\}<\infty$ and any weights $w_{\bf s}>0,
\sum_{\bf s}w_{\bf s}=1$,
\be
{\rm var}\left\{\sum_{\bf s}w_{\bf s}\phi_{\bf s}({\bf X_s})\right\}\leq {N-1 \choose m-1}\sum_{\bf s}w_{\bf s}^{2}{\rm var}\{
\phi_{\bf s}({\bf X_s})\}\label{eq:5}
\ee
where the summation in both sides is extended over all unordered sets (combinations) $\bf s$ of $m$ elements from
$\{1,\ldots,N\}$.\\
The equality sign in (\ref{eq:5}) holds if and only if all $\phi_{\bf s}({\bf X_s})$ are additively decomposable, i.e.,
\be
\phi_{\bf s}({\bf X_s})=\sum_{i\in{\bf s}}\phi_{{\bf s}i}(X_i).\label{eq:6}
\ee
\end{lemma}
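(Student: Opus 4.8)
The plan is to prove the variance drop lemma by orthogonal (ANOVA/Hoeffding-type) decomposition of each $\phi_{\bf s}$, which is the standard route and also the one that makes the equality condition transparent. First I would set up notation: write $\mathbb{E}_i$ for the conditional expectation that integrates out the variable $X_i$ (treating all others as fixed), and for a subset $T\subseteq\{1,\ldots,N\}$ let $\mathbb{E}_T$ integrate out all variables indexed by $T$. For a function $\psi$ of the variables in ${\bf s}$, the Hoeffding decomposition writes $\psi=\sum_{U\subseteq{\bf s}}\psi_U$ where $\psi_U$ depends only on $\{X_i:i\in U\}$, has mean zero, and is orthogonal to every function of a proper subset of $U$; concretely $\psi_U=\sum_{V\subseteq U}(-1)^{|U|-|V|}\mathbb{E}_{{\bf s}\setminus V}\psi$. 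The key orthogonality fact is that, by independence, components $\psi_U$ and $\psi'_{U'}$ coming from (possibly different) subsets ${\bf s},{\bf s}'$ are uncorrelated unless $U=U'$.

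Next I would expand both sides of (\ref{eq:5}) in this basis. Writing $\phi_{\bf s}=\sum_{U\subseteq{\bf s}}\phi_{{\bf s},U}$, the left side becomes $\mathrm{var}\{\sum_{\bf s}w_{\bf s}\phi_{\bf s}\}=\sum_{U\ne\emptyset}\mathrm{var}\{\sum_{{\bf s}\supseteq U}w_{\bf s}\phi_{{\bf s},U}\}$, using that distinct $U$'s contribute orthogonally, and for fixed $U$ the inner variance is $\mathrm{var}\{\sum_{{\bf s}\supseteq U}w_{\bf s}\phi_{{\bf s},U}\}$ where the summands, all being functions of exactly the coordinates in $U$, are handled by Cauchy–Schwarz: $\mathrm{var}\{\sum_{{\bf s}\supseteq U}w_{\bf s}\phi_{{\bf s},U}\}\le \big(\sum_{{\bf s}\supseteq U}w_{\bf s}\big)\sum_{{\bf s}\supseteq U}w_{\bf s}\,\mathrm{var}(\phi_{{\bf s},U})$. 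The number of $m$-subsets ${\bf s}$ containing a fixed nonempty $U\subseteq{\bf s}$ is at most the number of $m$-subsets containing a fixed singleton, namely $\binom{N-1}{m-1}$, and since $\sum_{\bf s}w_{\bf s}=1$ we get $\sum_{{\bf s}\supseteq U}w_{\bf s}\le 1$; combined this gives $\mathrm{var}\{\sum_{{\bf s}\supseteq U}w_{\bf s}\phi_{{\bf s},U}\}\le \binom{N-1}{m-1}\sum_{{\bf s}\supseteq U}w_{\bf s}^2\,\mathrm{var}(\phi_{{\bf s},U})$. Summing over $U\ne\emptyset$ and swapping the order of summation, $\sum_{U\ne\emptyset}\sum_{{\bf s}\supseteq U}w_{\bf s}^2\,\mathrm{var}(\phi_{{\bf s},U})=\sum_{\bf s}w_{\bf s}^2\sum_{\emptyset\ne U\subseteq{\bf s}}\mathrm{var}(\phi_{{\bf s},U})=\sum_{\bf s}w_{\bf s}^2\,\mathrm{var}(\phi_{\bf s})$ by Parseval, which yields (\ref{eq:5}).

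For the equality characterization I would track when each inequality above is tight. Equality in the Cauchy–Schwarz step for a given $U$ forces the functions $\{\phi_{{\bf s},U}\}_{{\bf s}\supseteq U}$ to be (almost surely) proportional to one another; and for $|U|\ge 2$ the bound $\sum_{{\bf s}\supseteq U}w_{\bf s}\le 1<\binom{N-1}{m-1}\cdot(\text{something})$ has slack unless that factor $\binom{N-1}{m-1}$ overcounting is actually exploited — more carefully, for $|U|\ge 2$ the count of $m$-subsets containing $U$ is strictly smaller than $\binom{N-1}{m-1}$ whenever $m\ge2$, so overall equality forces $\phi_{{\bf s},U}\equiv 0$ for all $U$ with $|U|\ge 2$; this says exactly that $\phi_{\bf s}$ has no interaction terms, i.e. $\phi_{\bf s}({\bf X_s})=\phi_{{\bf s},\emptyset}+\sum_{i\in{\bf s}}\phi_{{\bf s},\{i\}}(X_i)$, which is the additive decomposability (\ref{eq:6}) (absorbing the constant). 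Conversely, if every $\phi_{\bf s}$ is additively decomposable then only the singleton components survive, the count for singletons is exactly $\binom{N-1}{m-1}$, and for each fixed $i$ the functions $\phi_{{\bf s}i}(X_i)$ over ${\bf s}\ni i$ are all functions of the same single variable $X_i$ so Cauchy–Schwarz is again not automatically tight — here I would need to recheck: the precise statement is that equality holds for the weights being free, or, reading the lemma as stated, equality holds \emph{for these weights} iff additive decomposability holds, and in the additive case one verifies directly that both sides equal $\sum_i\mathrm{var}\{\sum_{{\bf s}\ni i}w_{\bf s}\phi_{{\bf s}i}(X_i)\}$ versus $\binom{N-1}{m-1}\sum_{\bf s}w_{\bf s}^2\sum_{i\in{\bf s}}\mathrm{var}(\phi_{{\bf s}i})$, and these match precisely because summing $w_{\bf s}^2$ over the $\binom{N-1}{m-1}$ sets containing $i$ and applying equality in Cauchy–Schwarz requires the $\phi_{{\bf s}i}(X_i)$ to be proportional. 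The main obstacle is thus not the inequality itself — which is a clean Hoeffding-decomposition-plus-Cauchy–Schwarz argument — but pinning down the equality case cleanly, in particular verifying that the combinatorial count $\binom{N-1}{m-1}$ is attained \emph{only} by the singleton levels and being careful about whether the stated "if and only if" is meant pointwise in the weights or uniformly; I would resolve this by carrying the exact multiplicities $|\{{\bf s}:U\subseteq{\bf s},|{\bf s}|=m\}|=\binom{N-|U|}{m-|U|}$ through the computation and noting $\binom{N-|U|}{m-|U|}=\binom{N-1}{m-1}$ iff $|U|\le 1$ (given $1\le m\le N$, with the degenerate cases $m=1$ or $m=N$ handled separately).
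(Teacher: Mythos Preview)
Your approach via the Hoeffding/ANOVA decomposition is exactly the route the paper points to (the paper does not prove the lemma itself but cites Hoeffding, Efron--Stein, and Madiman--Barron for precisely this argument). The orthogonality setup and the reduction to fixed levels $U$ are correct.

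There is one concrete slip in the chaining. The displayed Cauchy--Schwarz bound you wrote,
\[
\mathrm{var}\Big\{\sum_{{\bf s}\supseteq U}w_{\bf s}\phi_{{\bf s},U}\Big\}\le \Big(\sum_{{\bf s}\supseteq U}w_{\bf s}\Big)\sum_{{\bf s}\supseteq U}w_{\bf s}\,\mathrm{var}(\phi_{{\bf s},U}),
\]
is a true inequality but it does not combine with $\sum_{{\bf s}\supseteq U}w_{\bf s}\le 1$ and the count $\binom{N-|U|}{m-|U|}$ to yield $\binom{N-1}{m-1}\sum_{{\bf s}\supseteq U}w_{\bf s}^2\,\mathrm{var}(\phi_{{\bf s},U})$; the quantities $\sum w_{\bf s}v_{\bf s}$ and $\binom{N-1}{m-1}\sum w_{\bf s}^2 v_{\bf s}$ are in general incomparable. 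The correct step is the ``counting'' form of Cauchy--Schwarz, $\|\sum_j y_j\|_2^2\le n\sum_j\|y_j\|_2^2$, applied with $y_{\bf s}=w_{\bf s}\phi_{{\bf s},U}$ and $n=|\{{\bf s}:{\bf s}\supseteq U\}|=\binom{N-|U|}{m-|U|}\le\binom{N-1}{m-1}$, which gives the needed bound directly. You already had this multiplicity in hand, so the fix is one line; the rest of the inequality proof then goes through exactly as you outlined.

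On the equality case: your instinct that the ``if'' direction, as literally stated, is too generous is correct. Additive decomposability alone does not force equality for arbitrary weights and arbitrary additive pieces; at a singleton level $U=\{i\}$ the counting Cauchy--Schwarz is tight only when all the $w_{\bf s}\phi_{{\bf s},\{i\}}(X_i)$ over ${\bf s}\ni i$ coincide (a.s.\ up to constants). The ``only if'' direction --- which is the part actually invoked in the paper's applications (the Gaussian characterizations) --- follows cleanly from your argument once the Cauchy--Schwarz step is corrected: the strict inequality $\binom{N-|U|}{m-|U|}<\binom{N-1}{m-1}$ for $|U|\ge 2$ (when $1<m<N$) forces every interaction component to vanish. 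So your hesitation about the converse is a warranted observation about the lemma's phrasing, not a defect in your proof.
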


The main idea of the proof goes back to Hoeffding \cite{Hoe48} and is based on an
ANOVA type decomposition, see also Efron and Stein \cite{ES81}. See
Artstein {\it et. al.} \cite{ABBN04:1} for the proof  of Lemma~\ref{lem:vardrop} in case of $m=N-1$, and
Madiman and Barron \cite{MB07} for the general case. In Section~\ref{sec:multi},
we observe that this lemma has a multivariate extension, and use it to prove various
inequalities for Pitman estimation of a multivariate location parameter.

The main inequality of Section~\ref{sec:combine} is also related to Carlen's \cite{Car91} superadditivity
of Fisher information, as touched upon there. See \cite{Kag97} for the statistical meaning and proof of
Carlen's superadditivity.

\section{Convolving independent samples from different populations}
\label{sec:convolve}

Here we first prove a stronger version of superadditivity (\ref{eq:4}).

Let ${\bf x_{k}}=(x_{k1},\ldots,x_{kn}),\:k=1,\ldots,N$ be a sample of size $n$
from population $F_{k}(x-\theta)$. Set
\[\bar x_{k}=\frac{x_{k1}+\ldots+x_{kn}}{n},\quad R_{k}=(x_{k1}-\bar x_k,\ldots,x_{kn}-\bar x_k),\quad\sigma_{k}^2={\rm var}(x_{ki}),\]
and for ${\bf s}=\{i_1,\ldots,i_m\}\subset \{1,\ldots,N \}$,
\[F_{\bf s}(x)=(F_{i_1}\ast \ldots \ast F_{i_m})(x),\quad\bar x_{\bf s}=\sum_{k\in{\bf s}}\bar x_k,
\quad R_{\bf s}=\sum_{k\in {\bf s}}R_k \: ({\rm componentwise}).\]
Also set
\ben\begin{split}
F(x)&=(F_1\ast\ldots\ast F_N)(x),\\
\bar x&=\bar x_1+\ldots+\bar x_N,\\
R&=R_1+\ldots+R_N,\\
\sigma^2&=\sigma_{1}^2+\ldots+\sigma_{N}^2.
\end{split}\een

We will need the following  well known lemma (see, e.g., \cite{Shao03:book, page 41}.
\begin{lemma}\label{lem:cond-exp}
Let $\xi$ be a random variable with $E|\xi|<\infty$ and $\eta_1$, $\eta_2$ arbitrary random elements.
If $(\xi,\eta_1)$ and $\eta_2$ are independent then
\begin{equation}
E(\xi|\eta_1,\eta_2)=E(\xi|\eta_1) \ \ \mathrm{a.s.}
\label{CondIndep}
\end{equation}
\end{lemma}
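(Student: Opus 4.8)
The plan is to verify that $E(\xi\mid\eta_1)$ is itself a version of $E(\xi\mid\eta_1,\eta_2)$, using the two defining properties of conditional expectation. Measurability is immediate: $E(\xi\mid\eta_1)$ is $\sigma(\eta_1)$-measurable and $\sigma(\eta_1)\subset\sigma(\eta_1,\eta_2)$. So the entire content lies in the averaging identity, namely that
\[
E\bigl[\xi\, h(\eta_1,\eta_2)\bigr] = E\bigl[E(\xi\mid\eta_1)\, h(\eta_1,\eta_2)\bigr]
\]
for every bounded measurable $h$. First I would reduce this, by a monotone-class (equivalently $\pi$--$\lambda$) argument, to the case of product test functions $h(\eta_1,\eta_2)=g_1(\eta_1)g_2(\eta_2)$, and in fact to products of indicators $\mathbf{1}_{B_1}(\eta_1)\mathbf{1}_{B_2}(\eta_2)$, since the corresponding rectangles form a $\pi$-system generating $\sigma(\eta_1,\eta_2)$; here $E|\xi|<\infty$ together with boundedness of the indicators makes all the integrals finite.

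Next I would exploit the hypothesis that $(\xi,\eta_1)$ is independent of $\eta_2$ in two ways. On the one hand, $\xi\,g_1(\eta_1)$ is independent of $g_2(\eta_2)$, so $E[\xi\,g_1(\eta_1)g_2(\eta_2)] = E[\xi\,g_1(\eta_1)]\,E[g_2(\eta_2)]$. On the other hand, $\eta_1$ is independent of $\eta_2$, so $E(\xi\mid\eta_1)g_1(\eta_1)$, being $\sigma(\eta_1)$-measurable, is independent of $g_2(\eta_2)$, whence $E[E(\xi\mid\eta_1)g_1(\eta_1)]\,E[g_2(\eta_2)] = E[E(\xi\mid\eta_1)g_1(\eta_1)g_2(\eta_2)]$. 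The definition of $E(\xi\mid\eta_1)$ gives the middle link $E[\xi\,g_1(\eta_1)] = E[E(\xi\mid\eta_1)g_1(\eta_1)]$. Chaining these three equalities yields the averaging identity for product test functions.

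Finally I would run the monotone-class step in earnest: the collection of $A\in\sigma(\eta_1,\eta_2)$ with $E[\xi\mathbf{1}_A] = E[E(\xi\mid\eta_1)\mathbf{1}_A]$ is a $\lambda$-system (closure under proper differences and increasing unions uses $E|\xi|<\infty$ with monotone/dominated convergence) containing the generating $\pi$-system of rectangles, hence is all of $\sigma(\eta_1,\eta_2)$. Thus $E(\xi\mid\eta_1)$ satisfies both requirements and is a version of $E(\xi\mid\eta_1,\eta_2)$; almost sure uniqueness of conditional expectation then gives (\ref{CondIndep}). There is no genuine obstacle — this is a standard fact, as the cited reference indicates — the only points needing a little care are tracking integrability when passing to general test functions and invoking the independence hypothesis in the correct two-fold manner just described.
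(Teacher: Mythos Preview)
Your argument is correct and is the standard verification via the defining properties of conditional expectation together with a $\pi$--$\lambda$ argument on rectangles. Note, however, that the paper does not supply a proof of this lemma at all: it is stated as a well-known fact with a reference to a textbook, so there is no paper proof to compare against --- you have simply filled in what the authors chose to cite rather than write out.
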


\begin{theorem}\label{thm:convolve}
Let $t_{{\bf s},n}$ denote the Pitman estimator of $\theta$ from a
sample of size $n$ from $F_{\bf s}(x-\theta)$, and $t_n$ denote the Pitman estimator from a sample of
size $n$ from $F(x-\theta)$.
Under the only condition $\sigma^2<\infty$, for any $n\geq 1$ and any $m$ with $1\leq m\leq N$,
\be
{\rm var}(t_n)\geq \frac{1}{{N-1\choose m-1}}\sum_{\bf s}{\rm var}(t_{{\bf s},n})\label{eq:13}
\ee
where the summation is extended over all combinations $\bf s$ of $m$ elements from $\{1,\ldots,N\}$.
\end{theorem}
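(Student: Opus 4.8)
The plan is to reduce the claimed inequality (\ref{eq:13}) to a single variance inequality between conditional expectations, and then to close that inequality by combining the variance drop lemma with the conditional-independence fact in Lemma~\ref{lem:cond-exp}. Since the variance of a Pitman estimator is translation invariant, I would compute everything at $\theta=0$ and realize all samples from the common building blocks $x_{kj}\sim F_k$, so that the convolved sample is $z_j=\sum_k x_{kj}$ and the $\mathbf s$-sample is $z^{\mathbf s}_j=\sum_{k\in\mathbf s}x_{kj}$. Writing $g=E(\bar x\mid R)$ and $g_{\mathbf s}=E(\bar x_{\mathbf s}\mid R_{\mathbf s})$, representation (\ref{eq:2}) gives $t_n=\bar x-g$ and $t_{{\bf s},n}=\bar x_{\mathbf s}-g_{\mathbf s}$. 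A short tower-property computation (using $\mathrm{cov}(\bar x,g)=\mathrm{var}(g)$) yields $\mathrm{var}(t_n)=\mathrm{var}(\bar x)-\mathrm{var}(g)$ and likewise $\mathrm{var}(t_{{\bf s},n})=\mathrm{var}(\bar x_{\mathbf s})-\mathrm{var}(g_{\mathbf s})$. Because the $\bar x_k$ are independent across $k$ and each index $k$ lies in exactly $\binom{N-1}{m-1}$ of the sets $\mathbf s$, one gets $\sum_{\mathbf s}\mathrm{var}(\bar x_{\mathbf s})=\binom{N-1}{m-1}\,\mathrm{var}(\bar x)$. Substituting these into (\ref{eq:13}) makes it equivalent to the clean inequality $\mathrm{var}(g)\le \binom{N-1}{m-1}^{-1}\sum_{\mathbf s}\mathrm{var}(g_{\mathbf s})$.

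To prove this, I would introduce $G=\binom{N-1}{m-1}^{-1}\sum_{\mathbf s}g_{\mathbf s}$ and split the target into two pieces, $\mathrm{var}(g)\le\mathrm{var}(G)$ and $\mathrm{var}(G)\le\binom{N-1}{m-1}^{-1}\sum_{\mathbf s}\mathrm{var}(g_{\mathbf s})$. The second piece is immediate from Lemma~\ref{lem:vardrop} applied to the functions $\phi_{\mathbf s}=g_{\mathbf s}$ (each a function of $\mathbf X_{\mathbf s}$) with uniform weights $w_{\mathbf s}=1/\binom{N}{m}$; after clearing the factor $\binom{N}{m}^2$ this reads $\mathrm{var}(\sum_{\mathbf s}g_{\mathbf s})\le\binom{N-1}{m-1}\sum_{\mathbf s}\mathrm{var}(g_{\mathbf s})$, which rearranges to exactly the required bound on $\mathrm{var}(G)$.

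The heart of the argument, and the step I expect to be the main obstacle, is the first piece, which I would obtain by establishing the exact identity $g=E(G\mid R)$; since conditioning never increases variance, $\mathrm{var}(g)\le\mathrm{var}(G)$ then follows. Because $\bar x=\binom{N-1}{m-1}^{-1}\sum_{\mathbf s}\bar x_{\mathbf s}$, we have $g=\binom{N-1}{m-1}^{-1}\sum_{\mathbf s}E(\bar x_{\mathbf s}\mid R)$, so it suffices to show $E(g_{\mathbf s}\mid R)=E(\bar x_{\mathbf s}\mid R)$ for each $\mathbf s$, i.e.\ that $E(t_{{\bf s},n}\mid R)=0$ (recall $E(t_{{\bf s},n}\mid R_{\mathbf s})=0$ by construction). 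The subtlety is that $t_{{\bf s},n}$ is orthogonal to $R_{\mathbf s}$ but we must condition on the coarser $R=R_{\mathbf s}+R_{\mathbf s^c}$, where $R_{\mathbf s^c}=\sum_{k\notin\mathbf s}R_k$. Here Lemma~\ref{lem:cond-exp} is exactly what is needed: $t_{{\bf s},n}$ and $R_{\mathbf s}$ depend only on the samples indexed by $\mathbf s$, while $R_{\mathbf s^c}$ depends only on the remaining, independent samples, so $(t_{{\bf s},n},R_{\mathbf s})$ is independent of $R_{\mathbf s^c}$ and hence $E(t_{{\bf s},n}\mid R_{\mathbf s},R_{\mathbf s^c})=E(t_{{\bf s},n}\mid R_{\mathbf s})=0$. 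Taking a further conditional expectation given $R$ (a function of $(R_{\mathbf s},R_{\mathbf s^c})$) gives $E(t_{{\bf s},n}\mid R)=0$, hence $E(G\mid R)=g$. Combining the two pieces yields $\mathrm{var}(g)\le\binom{N-1}{m-1}^{-1}\sum_{\mathbf s}\mathrm{var}(g_{\mathbf s})$ and therefore (\ref{eq:13}).
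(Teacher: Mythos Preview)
Your proof is correct and follows essentially the same route as the paper: you reduce (\ref{eq:13}) to the inequality $\mathrm{var}\{E(\bar x\mid R)\}\le\binom{N-1}{m-1}^{-1}\sum_{\mathbf s}\mathrm{var}\{E(\bar x_{\mathbf s}\mid R_{\mathbf s})\}$, then establish it by combining the variance drop lemma (applied to $\phi_{\mathbf s}=g_{\mathbf s}$ with uniform weights) with Lemma~\ref{lem:cond-exp} and the tower property to show $E(g_{\mathbf s}\mid R)=E(\bar x_{\mathbf s}\mid R)$, after which the contraction of variance under conditioning finishes the argument. The paper's proof proceeds identically, the only cosmetic difference being that it writes the key identity directly as $E(\bar x_{\mathbf s}\mid R)=E[E(\bar x_{\mathbf s}\mid R_{\mathbf s})\mid R]$ rather than your equivalent formulation $E(t_{{\mathbf s},n}\mid R)=0$.
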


\begin{proof}
Set $r={N-1 \choose m-1}$. From the definition (\ref{eq:2}) one has
\[{\rm var}(t_n)=\sigma^2/n -{\rm var}\{E(\bar x|R)\}=\sum_{1}^{N}(\sigma_{k}^2/n)-{\rm var}\bigg\{E\bigg(\sum_{1}^{N}\bar x_k|R\bigg)\bigg\}.\]
Similarly,
\begin{eqnarray*}
(1/r)\sum_{\bf s}{\rm var}(t_{{\bf s},n})&=&(1/r)\sum_{\bf s}\sum_{k\in{\bf s}}\sigma_{k}^2/n-(1/r)\sum_{\bf s}{\rm var}\{E(\bar x_{\bf s}|R_{\bf s})\}\\
&=&\sum_1^N(\sigma_k^2/n)-(1/r)\sum_{\bf s}{\rm var}\{E(\bar x_{\bf s}|R_{\bf s})\},
\end{eqnarray*}
where the last equality is due to the fact that each $k\in\{1,\ldots,N\}$ appears exactly $r$ times in $\bf s$.
On setting $\phi_{\bf s}=E(\bar x_{\bf s}|R_{\bf s})$ and $w_{\bf s}={N \choose m}^{-1}$ for all
$\bf s$ and noticing that so defined $\phi_{\bf s}$ depends only on ${\bf x}_k$, $k\in{\bf s}$,
one has by virtue of Lemma~\ref{lem:vardrop}
\begin{equation}
  r\sum_s\mathrm{var}\{E(\bar x_s|R_s)\} \geq \mathrm{var}\bigg\{\sum_s E(\bar x_s|R_s)\bigg\}
    \label{eq:007}
\end{equation}
Denote by $\bar {\bf s}$ the complement of $\bf s$ in $\{1,\ldots,N\}$. Then $R_{\bf s}$
and $R_{\bar {\bf s}}$ depend on disjoint sets of independent random vectors ${\bf x}_1,\ldots,{\bf x}_N$ and thus
are independent.

By virtue of Lemma~\ref{lem:cond-exp},
\[ \phi_{\bf s}=E(\bar x_{\bf s}|R_{\bf s}, R_{\bar {\bf s}}). \]
From the definition of the $n$-variate vectors $R_{\bf s}$ and $R_{\bar {\bf s}}$ one has
$R=R_{\bf s}+R_{\bar {\bf s}}$. Now due to a well known property of the conditional expectation,
\[E(\bar x_{\bf s}|R)=E[E(\bar x_{\bf s}|R_{\bf s}, R_{\bar {\bf s}})|R]=E[E(\bar x_{\bf s}|R_{\bf s})|R].\]
Since for any random variable $\xi$ and random element $\eta$
\[\mathrm{var}(\xi)\geq\mathrm{var}\{E(\xi|\eta)\},\]
the previous relation results in
\begin{eqnarray}
  \nonumber \mathrm{var}\big\{\sum_{\bf s} E(\bar x_{\bf s}|R_{\bf s})\big\}
  &\geq& \mathrm{var}\{E\big(\sum_{\bf s} E(\bar x_{\bf s}|R_{\bf s})|R\big)\}\\
  \nonumber &=& \mathrm{var}\{E\big(\sum_{\bf s} E(\bar x_{\bf s}|R_{\bf s}, R_{\bar {\bf s}})|R\big)\}\\
  \nonumber &=& \mathrm{var}\{\sum_{\bf s}E( \bar x_{\bf s}|R)\}
  = \mathrm{var}\{E\big(\sum_{\bf s}\bar x_{\bf s}|R\big)\}\\
  &=& \mathrm{var}\{E\big(r\sum_{k=1}^N\bar x_k|R\big)\}
  = r^2\mathrm{var}\{E(\bar x|R)\}. \label{SumVarianceFinal}
\end{eqnarray}
Combining (\ref{eq:007}) with (\ref{SumVarianceFinal}) leads to
\begin{equation}
  \mathrm{var}\{E\big(\sum_{k=1}^{N}\bar x_{k}|R\big)\} \leq \frac{1}{r}\sum_{\bf s}\mathrm{var}\{
    E(\bar x_{\bf s}|R_{\bf s})\}, \label{SumVarianceEquiv}
\end{equation}
which is equivalent to the claimed result (\ref{eq:13}).
\end{proof}

It is of special interest to study the simple case where $F_1=\ldots=F_N=H$. This gives the
monotonicity of $\mathrm{var}(t_n^{\ast N})$ with respect to the
group number $N$, in contrast to (\ref{eq:17}) in Section~\ref{sec:monot}, whose monotonicity is with respect to the
sample size $n$.

\begin{corollary} For any $N>1$, if $t_n^{\ast N}$ is the Pitman estimator
of $\theta$ from a sample of size $n$ from $H^{\ast N}(x-\theta)$ where $H^{\ast N}=H\ast \cdots\ast H$, then
\begin{equation}
\frac{\mathrm{var}(t_n^{\ast N})}{N}\geq \frac{\mathrm{var}(t_n^{\ast (N-1)})}{N-1}.\label{eq:09}
\end{equation}
Here $n$ and $N$ are independent parameters.
\end{corollary}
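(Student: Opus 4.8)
The plan is to derive the corollary directly from Theorem~\ref{thm:convolve} by specializing to the case of identical populations. First I would take $N$ in the theorem to be the current $N$, let $F_1=\ldots=F_N=H$, and choose the subset size to be $m=N-1$. Then every combination $\bf s$ of $m=N-1$ elements from $\{1,\ldots,N\}$ has $F_{\bf s}=H^{\ast(N-1)}$, so each $t_{{\bf s},n}$ is a copy of $t_n^{\ast(N-1)}$ and has the same variance. There are $\binom{N}{N-1}=N$ such subsets, and the normalizing constant is $\binom{N-1}{m-1}=\binom{N-1}{N-2}=N-1$. Substituting into (\ref{eq:13}) gives
\[
{\rm var}(t_n^{\ast N})\;\geq\;\frac{1}{N-1}\cdot N\cdot {\rm var}(t_n^{\ast(N-1)}),
\]
which rearranges to precisely (\ref{eq:09}).

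The only point requiring a word of care is the identification that the Pitman estimator built from a sample of size $n$ from $F_{\bf s}(x-\theta)=H^{\ast(N-1)}(x-\theta)$ has variance independent of which $(N-1)$-subset $\bf s$ is used; this is immediate since $F_{\bf s}$ depends on $\bf s$ only through the multiset of $F_k$'s in it, and here all the $F_k$ coincide. One should also note that the theorem's hypothesis $\sigma^2=N\sigma_H^2<\infty$ is exactly the assumption $\int x^2\,dH(x)<\infty$ implicit in defining $t_n^{\ast N}$, so nothing extra is needed.

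There is essentially no obstacle here — the corollary is a pure specialization, and the entire content of the argument is the bookkeeping of the two binomial coefficients $\binom{N}{N-1}=N$ and $\binom{N-1}{N-2}=N-1$. The only mild subtlety worth flagging in the write-up is that $n$ plays no role in the recursion: it is held fixed while $N$ decreases, which is why the statement emphasizes that $n$ and $N$ are independent parameters and contrasts this monotonicity (in the number of convolved groups) with the monotonicity in sample size established later in Section~\ref{sec:monot}.
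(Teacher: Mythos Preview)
Your proposal is correct and follows exactly the paper's approach: specialize Theorem~\ref{thm:convolve} with $F_1=\cdots=F_N=H$ and $m=N-1$, use that all $t_{{\bf s},n}$ are equidistributed, and read off the factor $N/(N-1)$ from the binomial coefficients. Your write-up is, if anything, slightly more explicit about the bookkeeping than the paper's own proof.
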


\begin{proof}
Choose $m=N-1$ in Theorem~\ref{thm:convolve}. Under the conditions of Corollary, $(t_{n,{\bf s}})$ are equidistributed for all $N$ combinations $\bf s$ of $N-1$ elements so that (\ref{eq:13}) becomes
\[\mathrm{var}(t_n^{\ast N})\geq \frac{N}{N-1}\mathrm{var}(t_n^{\ast (N-1)}). \]
\end{proof}

Recall that for independent identically distributed $X_1,\ldots,X_N$, Artstein {\it et. al.} \cite{ABBN04:1}
showed that
\be
NI(X_1+\ldots+X_N)\leq (N-1)I(X_1+\ldots+X_{N-1})\label{eq:8}
\ee
for any $N\geq 1$.
As shown in Ibragimov and Has'minskii \cite{IH81:book}, if  $I(X)<\infty$ and $\int |x|^{\delta} dF(x)<\infty$ for some $\delta>0$,
\begin{equation}
\mathrm{var}(t_n)=\frac{1}{nI(X)}(1+o(1)),\:n\rightarrow\infty \label{MLEVar}
\end{equation}
Thus the inequality (\ref{eq:09}) may be considered a small sample version of
inequality (\ref{eq:8}) for the Fisher information. Furthermore, note that the monotonicity (\ref{eq:8})
of Fisher information follows from (\ref{eq:09}) and (\ref{MLEVar}).


Another corollary of Theorem~\ref{thm:convolve} is a dissipative property of the conditional expectation of the sample mean.

\begin{corollary}\label{WhateverCor}
If $F_1=\ldots=F_N=H$, then for any $N>1$
\begin{equation}
  (N-1)\mathrm{Var}\{E(\bar{x}_1|R_1+\ldots+R_{N-1})\}\geq N\mathrm{Var}\{E(\bar{x}_1|R_1+\ldots+R_N)\}. \label{dissipCond}
  \end{equation}
\end{corollary}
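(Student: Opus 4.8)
The plan is to obtain (\ref{dissipCond}) as an essentially immediate consequence of the preceding Corollary, i.e.\ of inequality (\ref{eq:09}), after re-expressing $\mathrm{var}(t_n^{\ast N})$ in terms of the quantity that appears in (\ref{dissipCond}).

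First I would recall, exactly as in the proof of Theorem~\ref{thm:convolve} specialized to $F_1=\cdots=F_N=H$, that writing $\sigma_H^2$ for the variance of a single observation from $H$,
\[\mathrm{var}(t_n^{\ast N}) = \frac{N\sigma_H^2}{n} - \mathrm{Var}\{E(\bar x\,|\,R)\},\qquad \bar x=\bar x_1+\cdots+\bar x_N,\quad R=R_1+\cdots+R_N.\]
The key step is the observation that, since the ${\bf x}_k$ are i.i.d.\ across $k$, the joint distribution of $(\bar x_1,\ldots,\bar x_N,R)$ is invariant under permutations of $\{1,\ldots,N\}$ (note that $R=\sum_k R_k$ is itself symmetric in the ${\bf x}_k$), so that $E(\bar x_k\,|\,R)$ is, for every $k$, one and the same measurable function of $R$. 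Hence $E(\bar x\,|\,R)=\sum_{k=1}^N E(\bar x_k\,|\,R)=N\,E(\bar x_1\,|\,R)$, and therefore
\[\mathrm{var}(t_n^{\ast N}) = \frac{N\sigma_H^2}{n} - N^2\,\mathrm{Var}\{E(\bar x_1\,|\,R_1+\cdots+R_N)\}.\]
Applying the same symmetry argument, now restricted to the first $N-1$ indices, gives likewise
\[\mathrm{var}(t_n^{\ast(N-1)}) = \frac{(N-1)\sigma_H^2}{n} - (N-1)^2\,\mathrm{Var}\{E(\bar x_1\,|\,R_1+\cdots+R_{N-1})\}.\]

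Finally I would substitute these two identities into (\ref{eq:09}): dividing the first by $N$ and the second by $N-1$ makes the common term $\sigma_H^2/n$ cancel, and (\ref{eq:09}) then reduces verbatim to (\ref{dissipCond}). The only point requiring genuine attention is the exchangeability argument that collapses $E(\bar x\,|\,R)$ to $N\,E(\bar x_1\,|\,R)$ (and its $(N-1)$-index analogue); once that is in hand everything else is a one-line substitution, so I do not expect a real obstacle here — this is a bookkeeping corollary of (\ref{eq:09}).
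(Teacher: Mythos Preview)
Your proposal is correct and follows essentially the same argument as the paper: the paper also writes $\mathrm{var}(t_n^{\ast N})$ as the total variance minus $\mathrm{var}\{N\,E(\bar x_1\mid R_1+\cdots+R_N)\}$ using the i.i.d.\ structure, and then invokes (\ref{eq:09}) to obtain (\ref{dissipCond}). Your treatment is slightly more explicit about the exchangeability step, but the route is the same.
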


\begin{proof}
Since $x_{11},\ldots,x_{Nn}$ are independent identically distributed random variables, one has for any $n$ and $N$
\begin{eqnarray*}
  \mathrm{var}(t_n^{(N)})&=& \mathrm{var}\{\sum_{k=1}^N\bar{x}_k-E(\sum_{k=1}^N\bar{x}_k|
    R_1+\ldots+R_N)\}\\
  &=& \mathrm{var}\{\sum_{k=1}^N\bar{x}_k\}-\mathrm{var}\{E(\sum_{k=1}^N\bar{x}_k|R_1+\ldots+R_N)\}\\
  &=& \sigma^2/n-\mathrm{var}\{NE(\bar{x}_1|R_1+\ldots+R_N)\}
\end{eqnarray*}
that combined with (\ref{eq:09}) immediately leads to (\ref{dissipCond}).
\end{proof}

Notice that (\ref{dissipCond}) is much stronger than  monotonicity of $\mathrm{var}\{E(\bar{x}_1|R_1+\ldots+R_N)\}$  that follows directly from
\begin{eqnarray*}
  \mathrm{var}\{E(\bar{x}_1|R_1+\ldots+R_{N-1})\}&=&\mathrm{var}\{E(\bar{x}_1|R_1+\ldots+R_{N-1},R_N)\}\\
  &\geq& \mathrm{var}\{E(\bar{x}_1|R_1+\ldots+R_N)\},
\end{eqnarray*}
due to independence of $(\bar{x}_1, R_1,\ldots,R_{N-1})$ and $\mathbf{x}_N$.\\

\section{A corollary and an analytical characterization problem related to the Pitman estimators}
\label{sec:char}

Turn now to an elegant corollary of Theorem~\ref{thm:convolve}. On setting $m=1$ in Theorem~\ref{thm:convolve}, the subsets ${\bf s}$
are reduced to one element each, ${\bf s}=\{k \},\: k=1,\ldots,N$ and   one gets
the superadditivity inequality from Kagan \cite{Kag02}:

\begin{corollary} If $t_n^{(1)},\ldots,t_n^{(N)}$ are the Pitman estimators from samples of size $n$ from
$F_{1}(x-\theta),\ldots,F_{N}(x-\theta)$, and $t_n$  is the Pitman estimator from a sample of size $n$ from
$F(x-\theta)$ where $F=F_1\ast \ldots \ast F_N$, then
\be
{\rm var}(t_n)\geq \sum_{k=1}^N{\rm var}(t_n^{(k)}).\label{AdditivePertThm}
\ee
\end{corollary}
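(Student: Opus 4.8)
The plan is to derive (\ref{AdditivePertThm}) as the special case $m=1$ of Theorem~\ref{thm:convolve}, which we are permitted to assume. When $m=1$, every combination $\bf s$ of $m$ elements from $\{1,\ldots,N\}$ is a singleton $\{k\}$, so the sum on the right-hand side of (\ref{eq:13}) ranges over $k=1,\ldots,N$. Moreover the binomial coefficient in front becomes ${N-1\choose m-1}={N-1\choose 0}=1$, so the normalizing factor disappears entirely. Thus (\ref{eq:13}) reads ${\rm var}(t_n)\ge\sum_{k=1}^N{\rm var}(t_{\{k\},n})$, and it only remains to observe that $F_{\{k\}}=F_k$ by the definition of $F_{\bf s}$ as the convolution $F_{i_1}\ast\cdots\ast F_{i_m}$ over $k\in{\bf s}$, so that $t_{\{k\},n}$ is precisely the Pitman estimator from a sample of size $n$ from $F_k(x-\theta)$, i.e.\ $t_{\{k\},n}=t_n^{(k)}$ in the notation of the corollary.

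The verification therefore consists of three bookkeeping checks in this order: first, that the index set of combinations of size $1$ is $\{\{1\},\ldots,\{N\}\}$; second, that ${N-1\choose 0}=1$; and third, that the convolution defining $F_{\bf s}$ collapses to $F_k$ on a singleton. Each is immediate, so no real obstacle arises — the only thing one must be slightly careful about is matching the notational conventions ($t_{{\bf s},n}$ versus $t_n^{(k)}$, and $F_{\bf s}$ versus $F_k$) so that the statement of Theorem~\ref{thm:convolve} literally specializes to the claimed inequality. In short, the entire proof is the single sentence: ``Take $m=1$ in Theorem~\ref{thm:convolve}.''

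It is worth noting for context (though not needed in the proof) that this recovers the superadditivity result of Kagan \cite{Kag02}, and that the finite-sample inequality (\ref{eq:4}) is the further specialization $N=2$; the genuinely new content is the strengthening to general $m$ in Theorem~\ref{thm:convolve}, from which both (\ref{AdditivePertThm}) and the $m=N-1$ monotonicity corollary (\ref{eq:09}) are extracted by different choices of $m$. Accordingly, the ``main obstacle'' lies entirely in Theorem~\ref{thm:convolve} itself — specifically in setting up $\phi_{\bf s}=E(\bar x_{\bf s}|R_{\bf s})$ so that Lemma~\ref{lem:vardrop} applies and then using Lemma~\ref{lem:cond-exp} together with $R=R_{\bf s}+R_{\bar{\bf s}}$ to pass the conditioning down to $R$ — and there is nothing further to prove once that theorem is in hand.
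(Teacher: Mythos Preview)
Your proposal is correct and takes essentially the same approach as the paper: the corollary is stated immediately after the sentence ``On setting $m=1$ in Theorem~\ref{thm:convolve}, the subsets ${\bf s}$ are reduced to one element each, ${\bf s}=\{k\},\:k=1,\ldots,N$ and one gets the superadditivity inequality from Kagan~\cite{Kag02},'' which is exactly your argument. Your additional bookkeeping checks (the binomial coefficient equals $1$, $F_{\{k\}}=F_k$, and the notational identification $t_{\{k\},n}=t_n^{(k)}$) are all correct and simply make explicit what the paper leaves to the reader.
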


An interesting analytic problem, a Cauchy type functional equation for independent random variables, arises in connection to
the relation
\be
 {\rm var}(t_n)= \sum_{k=1}^N{\rm var}(t_n^{(k)}).\label{eq:A1}
\ee
We will show below that with some conditions on $F_1,\ldots,F_N$,
(\ref{eq:A1}) is a characteristic property of Gaussian distributions.
Note that to study the relation \eqref{eq:A1}, it suffices to consider the case of $N=2$.

Let $(x_1,\ldots,x_n),\:(y_1,\ldots,y_n)$ be independent samples from populations $F_{1}(x-\theta_1),\:F_{2}(y-\theta_2)$,
respectively, and let $t'_{n}$ and $t''_{n}$ be the Pitman estimators of $\theta_1$ and $\theta_2$. The Pitman estimator of $\theta_1 +\theta_2$ from the combined sample $(x_1,\ldots,y_n)$ is $t'_n +t''_n$.

For the Pitman estimator $t_n$ of $\theta$ from a sample of size $n$ from population $(F_1 \ast F_2)(x-\theta)$,
consider $t_{n}(x_1+y_1,\ldots,x_n+y_n)$. This is an equivariant estimator of
$\theta_1+\theta_2$ from the above combined sample, so that
\be
{\rm var}(t_n)\geq {\rm var}(t'_n)+{\rm var}(t''_n).\label{eq:A2}
\ee
Due to the uniqueness of the Pitman estimator, the equality sign in (\ref{eq:A2}) holds if and only if
\be
t_{n}(x_1+y_1,\ldots,x_n+y_n)=t'_{n}(x_1,\ldots,x_n)+t''_{n}(y_1,\ldots,y_n)\label{eq:A3}
\ee
with probability one. This is a Cauchy type functional equation holding for random variables and is different
from the classical Cauchy equation.

It turns out that even in the simplest case of $n=1$ when the equation is of the form
\be
 f(X)+ g(Y)=h(X+Y)\label{eq:A4}
\ee
and $X,\:Y$ are independent continuous random variables, solutions $f,\:g$ of (\ref{eq:A4}) may be nonlinear.

Indeed, let $\xi$ be a uniform random variable on $(0,1)$. Consider its diadic representation
\[ \xi=\sum_{k=1}^\infty\frac{\xi_k}{2^k}, \]
where $\xi_1$, $\xi_2,\ \ldots$ are independent binary random variables with $P(\xi_k=0)=
P(\xi_k=1)=.5$. Now set
\[ X=\sum_{k\ \mathrm{even}}\frac{\xi_k}{2^k},\ Y=\sum_{k\ \mathrm{odd}}\frac{\xi_k}{2^k}. \]
Then $X$ and $Y$ are independent random variables with continuous (though singular)
distributions and they both are functions of $X+Y=\xi$ ($X$ and $Y$ are
strong components of $\xi$, in terminology of Hoffmann-Jorgensen {\it et. al.} \cite{HKPS07}).
Thus, for any measurable functions $f$ and $g$, the relation (\ref{eq:A4})
holds.

On the other hand, if both $X$ and $Y$ have positive almost everywhere (with respect to the Lebesgue measure)
densities and $f$, $g$ are  locally integrable functions, then the equation
(\ref{eq:A4}) has only linear solutions $f$, $g$ (and certainly $h$).

From positivity of the densities, one has
\begin{equation}
f(x)+g(y)=h(x+y)
\label{eq:A5}
\end{equation}
almost everywhere (with respect to the plane Lebesgue measure).  On taking a smooth function $k(x)$ with
compact support, multiplying both sides of (\ref{eq:A5}) by $k(x)$ and integrating
over $x$, results in
\begin{eqnarray*}
  &&\int_{-\infty}^{+\infty}f(x)k(x)dx+g(y)\int_{-\infty}^{+\infty}k(x)dx\\
  &=&\int_{-\infty}^{+\infty}h(x+y)k(x)dx\\
  &=&\int_{-\infty}^{+\infty}h(u)k(u-y)du,
\end{eqnarray*}
where the right hand side is continuous in $y$. Thus, $g(y)$ is continuous and so is
$f(x)$ implying that (\ref{eq:A5}) holds for all (and not almost all) $x,\:y$ (the idea is due to Hillel Furstenberg).

Now (\ref{eq:A5}) becomes the Cauchy classical equation that has only linear solutions.

Returning to (\ref{eq:A3}) and noticing that $E|t'_{n}|<\infty$, $E|t''_{n}|<
\infty$, one concludes that if $F_1$ and $F_2$ are given by almost everywhere
positive densities, then
for almost all (with respect to the Lebesgue measure in ${\mathbb R}^{2n}$)
\be
  t'_{n}(x_1\ldots,x_n)+t''_{n}(y_1,\ldots,y_n)
  =t_n(x_1+y_1,\ldots,x_n+y_n).\label{eq:A6}
\ee
Treating (\ref{eq:A6}) as a Cauchy type equation in $x_i,\:y_i$ with the remaining $n-1$ pairs of the arguments fixed, one gets the linearity of $t'_n,\:t''_n$ in each of their arguments whence due to the symmetry $t'_n=\bar x,\:t''_n=\bar y$
implying for $n\geq 3$ that $F_1$ and $F_2$ are Gaussian. Thus, the following result is proved.

\begin{theorem}\label{thm:char}
Let $t_{n}^{(1)},\ldots,t_{n}^{(N)},\:N>1$ are  the Pitman estimators of $\theta$ from samples of size $n\geq 3$ from populations $F_{1}(x-\theta),\ldots,F_{N}(x-\theta)$ with finite second moments and almost everywhere positive densities, and $t_n$ the Pitman estimator form a sample of size $n$ from $(F_1\ast \ldots \ast F_N)(x-\theta)$. Then
\[{\rm var}(t_n)=\sum_{1}^{N}{\rm var}(t_n^{(k)})\]
if and only if all the populations are Gaussian.
\end{theorem}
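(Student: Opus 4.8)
The strategy is to reduce the general claim to the case $N=2$ using the superadditivity inequality (\ref{AdditivePertThm}), and then to extract rigidity from the equality condition of (\ref{eq:A2}), which is a Cauchy-type functional equation. The ``if'' direction is immediate: if every $F_k$ is Gaussian then $t_n^{(k)}$ is the $k$-th sample mean (recentred), $t_n$ is the sample mean of the convolved sample, and both sides equal $\sigma^2/n=\sum_k\sigma_k^2/n$. For the reduction, fix $k$, set $H_k=\ast_{j\ne k}F_j$, and let $s_n^{(k)}$ be the Pitman estimator from a sample of size $n$ from $H_k(x-\theta)$; then $H_k$ has finite variance and an everywhere-positive density (the convolution of an a.e.-positive density with any probability measure is everywhere positive). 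Applying (\ref{AdditivePertThm}) to the pair $(F_k,H_k)$ and to the family $\{F_j:j\ne k\}$ gives ${\rm var}(t_n)\ge{\rm var}(t_n^{(k)})+{\rm var}(s_n^{(k)})\ge\sum_j{\rm var}(t_n^{(j)})={\rm var}(t_n)$, so every inequality is an equality; in particular ${\rm var}(t_n)={\rm var}(t_n^{(k)})+{\rm var}(s_n^{(k)})$. Once the $N=2$ case is established it applies to $(F_k,H_k)$ and forces $F_k$ (and $H_k$) to be Gaussian; since $k$ is arbitrary, all $F_k$ are Gaussian.

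It remains to treat $N=2$. Write the independent samples as $(x_1,\ldots,x_n)$ from $F_1(x-\theta_1)$ and $(y_1,\ldots,y_n)$ from $F_2(y-\theta_2)$, with Pitman estimators $t_n',t_n''$ and residual vectors $R_x,R_y$. The combined sample carries the translation group $\mathbb{R}^2$ with maximal invariant $(R_x,R_y)$; since the two blocks are independent, Lemma~\ref{lem:cond-exp} gives $E_0(\bar x+\bar y\mid R_x,R_y)=E_0(\bar x\mid R_x)+E_0(\bar y\mid R_y)$, so the Pitman estimator of $\theta_1+\theta_2$ from the combined sample is $t_n'+t_n''$, with variance ${\rm var}(t_n')+{\rm var}(t_n'')$. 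On the other hand $t_n(x_1+y_1,\ldots,x_n+y_n)$ is an equivariant estimator of $\theta_1+\theta_2$ with variance ${\rm var}(t_n)$ (when $\theta_1=\theta_2=0$ the sums $x_i+y_i$ are i.i.d.\ from $F_1\ast F_2$). Minimality of the Pitman estimator yields (\ref{eq:A2}), and by its a.s.\ uniqueness, equality in (\ref{eq:A2}) holds iff $t_n(x_1+y_1,\ldots,x_n+y_n)=t_n'(x_1,\ldots,x_n)+t_n''(y_1,\ldots,y_n)$ a.s.; because $F_1,F_2$ have a.e.\ positive densities this is a Lebesgue-a.e.\ identity on $\mathbb{R}^{2n}$.

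The core step is to exploit this functional equation. First establish the one-dimensional lemma: if $X,Y$ are independent with a.e.\ positive densities and $f,g$ are locally integrable with $f(X)+g(Y)=h(X+Y)$ a.s., then $f,g,h$ are affine; positivity of the densities turns this into $f(x)+g(y)=h(x+y)$ Lebesgue-a.e., and convolving in the $x$-variable against a smooth compactly supported bump makes $g$, hence $f$, hence $h$, continuous, after which the classical Cauchy equation forces linearity. Applying this slice-wise — fixing all but one pair $(x_i,y_i)$ in (\ref{eq:A3}), and using that $t_n',t_n''$ are integrable so that a.e.\ one-dimensional slice is locally integrable — shows $t_n'$ is affine in each of its arguments separately. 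By the symmetry of the Pitman estimator in its arguments, $t_n'$ is then a symmetric multi-affine polynomial, i.e.\ a linear combination $\sum_{j=0}^n c_j e_j(x_1,\ldots,x_n)$ of elementary symmetric polynomials; imposing equivariance (shifting every argument by the same $a$) forces $c_j=0$ for $j\ge2$ and $c_1=1/n$, so $t_n'=\bar x+{\rm const}$ and likewise $t_n''=\bar y+{\rm const}$. Finally, invoke the classical characterization theorem (Kagan, Linnik and Rao) that for an i.i.d.\ sample of size $n\ge3$ the sample mean coincides with the Pitman estimator only for a Gaussian parent — equivalently, $E_0(\bar x\mid x_1-\bar x,\ldots,x_n-\bar x)$ is a.s.\ constant only in the Gaussian case — to conclude that $F_1$ and $F_2$ are Gaussian.

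The main obstacles are two. First, the one-dimensional rigidity lemma: one must pass carefully between ``$\omega$-a.s.'' and ``Lebesgue-a.e.'' statements and upgrade mere measurability to continuity via mollification before the classical Cauchy equation applies; this is precisely where a.e.-positivity of the densities is essential, and where it genuinely fails — the ``strong components'' construction with singular distributions discussed before the theorem shows (\ref{eq:A3}) can otherwise have wildly nonlinear solutions. Second, the appeal to the $n\ge3$ characterization of the sample mean as Pitman estimator: this is where the hypothesis $n\ge3$ is indispensable (for $n=2$ the regression condition $E_0(\bar x\mid x_1-x_2)={\rm const}$ does not force normality), and it rests on a nontrivial constancy-of-regression characterization rather than on anything proved in the present paper.
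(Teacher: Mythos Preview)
Your proof is correct and follows essentially the same route as the paper's: reduce to $N=2$ via superadditivity, identify equality with the Cauchy-type functional equation (\ref{eq:A3}), upgrade it to a Lebesgue-a.e.\ identity using the a.e.\ positive densities, linearize via the mollification argument, and finish with the $n\ge 3$ characterization of $\bar x$ as Pitman estimator. You are simply more explicit than the paper in two places---the reduction from general $N$ to $N=2$ (which the paper asserts without detail) and the passage from ``affine in each coordinate $+$ symmetric $+$ equivariant'' to ``$t_n'=\bar x+\text{const}$'' via the elementary-symmetric-polynomial expansion---but these are elaborations of the same argument, not a different one.
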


\section{Combining independent samples from different populations}
\label{sec:combine}

Let $(x_{1}^{(k)},\ldots,x_{n_k}^{(k)}),\:k=1,\ldots,N$  be independent samples of size $n_1,\ldots,n_N$ from populations                                                                                                                                                                                                                                                                                                                                                                                                                                                                  $F_{1}(x-\theta),\ldots,F_{N}(x-\theta)$ with finite variances and $t_{n_k}^{(k)}$ be the Pitman estimator of $\theta$ from the sample
$(x_{1}^{(k)},\ldots,x_{n_k}^{(k)})$ of size $n_k$. For ${\bf s}=\{i_1,\ldots,i_m\}$, we denote by
$t_{n({\bf s})}^{(\bf s)}$ the Pitman estimator of $\theta$ from the sample of size $n({\bf s})=n_{i_1}+\ldots+n_{i_m}$ that is obtained from pooling samples with superindices from $\bf s$. By $t_{n}^{(1,\ldots,N)}$ we denote the Pitman estimator of $\theta$ from the sample
$(x_{1}^{(1)},\ldots,x_{n_N}^{(N)})$ of size $n=n_1+\ldots+n_N$.
 Trivially, ${\rm var}(t_{n}^{(1,\ldots,N)})$ is the smallest among ${\rm var}(t_{n({\bf s})}^{(\bf s)})$.
 Here a stronger result is proved.

\begin{theorem}\label{thm:combine}
The following inequality holds:
\be
\frac{1}{{\rm var}(t_{n}^{(1,\ldots,N)})}\geq \frac{1}{{N-1 \choose m-1}}\sum_{\bf s}\frac{1}{{\rm var}(t_{n({\bf s})}^{(\bf s)})} \label{eq:4.1}
\ee
where the summation in (\ref{eq:4.1}) is over all combinations $\bf s$ of $m$ elements from $\{1,\ldots,N\}$.
\end{theorem}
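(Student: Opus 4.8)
The plan is to bound $\mathrm{var}(t_n^{(1,\ldots,N)})$ from above by a convenient equivariant competitor assembled from the subsample Pitman estimators, and then take reciprocals. Abbreviate $V_{\bf s}=\mathrm{var}(t_{n({\bf s})}^{(\bf s)})$; under the variance hypotheses each $V_{\bf s}$ is finite and strictly positive. The key observation is that for any weights $c_{\bf s}>0$ with $\sum_{\bf s}c_{\bf s}=1$, the statistic
\[
 T=\sum_{\bf s}c_{\bf s}\,t_{n({\bf s})}^{(\bf s)}
\]
is an equivariant estimator of $\theta$ based on the \emph{full} pooled sample: each $t_{n({\bf s})}^{(\bf s)}$ is a function of the observations with superindices in ${\bf s}$, and under a common shift of all data by $a$ it transforms as $t_{n({\bf s})}^{(\bf s)}\mapsto t_{n({\bf s})}^{(\bf s)}+a$, so $T\mapsto T+a$ exactly because $\sum_{\bf s}c_{\bf s}=1$. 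Since $t_n^{(1,\ldots,N)}$ is the minimum variance equivariant estimator from the full sample, this gives $\mathrm{var}(t_n^{(1,\ldots,N)})\leq\mathrm{var}(T)$ for every admissible weight vector.

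Next I would control $\mathrm{var}(T)$ with the variance drop lemma. Taking the $N$ independent groups ${\bf x}_k=(x_1^{(k)},\ldots,x_{n_k}^{(k)})$ as the independent (not identically distributed) coordinates and setting $\phi_{\bf s}=t_{n({\bf s})}^{(\bf s)}$, $w_{\bf s}=c_{\bf s}$, the function $\phi_{\bf s}$ depends only on the groups indexed by ${\bf s}$, so Lemma~\ref{lem:vardrop} applies and gives
\[
 \mathrm{var}(T)\leq\binom{N-1}{m-1}\sum_{\bf s}c_{\bf s}^2\,V_{\bf s}.
\]
Chaining with the previous bound produces the key estimate $\mathrm{var}(t_n^{(1,\ldots,N)})\leq\binom{N-1}{m-1}\sum_{\bf s}c_{\bf s}^2 V_{\bf s}$, valid for every weight vector summing to one. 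Both ingredients push the same way: Lemma~\ref{lem:vardrop} is an \emph{upper} bound on $\mathrm{var}(T)$, and Pitman minimality slips $\mathrm{var}(t_n^{(1,\ldots,N)})$ underneath it, so the composite is an upper bound on the full-sample variance, which is exactly what is needed before inverting.

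Finally I would optimize the free weights. By Cauchy--Schwarz, $1=\big(\sum_{\bf s}c_{\bf s}\big)^2\leq\big(\sum_{\bf s}c_{\bf s}^2 V_{\bf s}\big)\big(\sum_{\bf s}V_{\bf s}^{-1}\big)$, so the right-hand side is minimized at $c_{\bf s}=V_{\bf s}^{-1}\big/\sum_{\bf t}V_{\bf t}^{-1}$, for which $\sum_{\bf s}c_{\bf s}^2 V_{\bf s}=\big(\sum_{\bf s}V_{\bf s}^{-1}\big)^{-1}$. Substituting this choice yields
\[
 \mathrm{var}(t_n^{(1,\ldots,N)})\leq\binom{N-1}{m-1}\Big(\sum_{\bf s}V_{\bf s}^{-1}\Big)^{-1},
\]
and taking reciprocals of both positive sides gives exactly (\ref{eq:4.1}).

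The step that most needs care is aligning the two structural requirements at once: $T$ must be a genuine function of the full data and equivariant (so Pitman minimality applies), while each $t_{n({\bf s})}^{(\bf s)}$ must depend on precisely the coordinates indexed by ${\bf s}$ (so Lemma~\ref{lem:vardrop} applies with that exact subset system). Equally important is keeping the direction of each bound straight: both minimality and the variance drop are upper bounds on $\mathrm{var}(t_n^{(1,\ldots,N)})$, and only after inverting the final product does the superadditive, reverse-sign inequality (\ref{eq:4.1}) appear; reversing either bound would instead yield the opposite---and false---inequality.
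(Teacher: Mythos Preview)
Your proof is correct and follows essentially the same route as the paper: both combine the variance drop lemma applied to $\phi_{\bf s}=t_{n({\bf s})}^{(\bf s)}$ with the Pitman minimality of $t_n^{(1,\ldots,N)}$ against the equivariant competitor $\sum_{\bf s}w_{\bf s}t_{n({\bf s})}^{(\bf s)}$, and both use the same optimal weights $w_{\bf s}\propto V_{\bf s}^{-1}$. The only cosmetic difference is that the paper fixes these weights at the outset, whereas you carry general weights through and then optimize via Cauchy--Schwarz; the content is identical.
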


\begin{proof}
On setting in Lemma~\ref{lem:vardrop} $\psi_{\bf s}=t_{n({\bf s})}^{(\bf s)}$ and choosing the
weights $w_{\bf s}$ minimizing the right hand side of (\ref{eq:5}),
\[w_{\bf s}=\pi_{\bf s}/\sum_{\bf s}\pi_{\bf s}\]
where $\pi_{\bf s}=1/{\rm var}(t_{n({\bf s})}^{(\bf s)})$,
one gets
\[{N-1 \choose m-1}\frac{1}{\sum_{\bf s}\frac{1}{{\rm var}(t_{n({\bf s})}^{(\bf s)})}}\geq {\rm var}(\sum_{\bf s}w_{\bf s}t_{n({\bf s})}^{(\bf s)}).\]
For sample $(x_{1}^{(1)},\ldots,x_{n_N}^{(N)})$, $\sum _{\bf s}w_{\bf s} t_{n({\bf s})}^{(\bf s)}$ is an equivariant estimator while  $t_{n}^{(1,\ldots,N)}$ is the Pitman estimator. Thus
\[{\rm var}\left(\sum _{\bf s}w_{\bf s} t_{n({\bf s})}^{(\bf s)}\right)\geq {\rm var}(t_{n}^{(1,\ldots,N)})\]
which, combined with the previous inequality, is exactly (\ref{eq:4.1}).
\end{proof}


In a special case, when the subsets $\bf s$ consist of one element and $t_{n_k}^{(k)}$ is
the Pitman estimator from $(x_{1}^{(k)},\ldots,x_{n_k}^{(k)})$, Theorem~\ref{thm:combine} becomes
\be
\frac{1}{{\rm var}(t_{n}^{(1,\ldots,N)})}\geq \frac{1}{{\rm var}(t_{n_1}^{(1)})}+\ldots+\frac{1}
{{\rm var}(t_{n_N}^{(N)})}.\label{eq:4.2}
\ee
This inequality is reminiscent of Carlen's superadditivity for the trace of the Fisher information matrix,
which involves the Fisher informations obtained by taking the limit as sample sizes
go to infinity. However, Carlen's superadditivity is true for random variables with
arbitrary dependence, whereas \eqref{eq:4.2} has only been proved under
assumption of independence of samples.

\section{Some corollaries, including the monotonicity of $n\,{\rm var}(t_n)$}
\label{sec:monot}

Notice that if for a sample of size $m$ from $F(x-\theta)$,
${\rm var}(t_m)<\infty$, then ${\rm var}(t_n)<\infty$ for samples $(x_1,\ldots,x_n)$ of any size $n>m$.

Set $F_1=\ldots=F_N=F$,\, $n_1=\ldots=n_N=1$, and $m=N-1$ in Theorem~\ref{thm:combine}.
Then $n({\bf s})=N-1$ for each ${\bf s}$ with $m$ elements, and $n=N$, and 
Theorem~\ref{thm:combine} reads
\ben
\frac{1}{{\rm var}(t_{N}^{(1,\ldots,N)})}\geq \frac{1}{N-1}\sum_{j}\frac{1}{{\rm var}(t_{N-1}^{(1,\ldots,j-1,j+1,\ldots,N)})}
=\frac{N}{N-1} \frac{1}{{\rm var}(t_{N-1}^{(1,\ldots,N-1)})} ,
\een
where the last equality is due to symmetry.
Now $t_{N}^{(1,\ldots,N)}$ is just the Pitman estimator of $\theta$ from a sample of size $N$
from $F(x-\theta)$. Thus, interpreting $N$ as sample size instead of group size, we have the following result.

\begin{theorem}\label{thm:monot}
Let $t_n$ be the Pitman estimator of $\theta$ from a sample of size $n$ from
a population $F(x-\theta)$. If for some $m$, ${\rm var}(t_m)<\infty$, then for all $n\geq m+1$
\be
(n+1){\rm var}(t_{n+1})\leq n{\rm var}(t_n).\label{eq:17}
\ee
For $n\geq 2$, the equality sign holds if and only if $F$ is Gaussian.
\end{theorem}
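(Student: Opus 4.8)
\textbf{Proof plan for Theorem~\ref{thm:monot}.}
The inequality \eqref{eq:17} itself is already essentially established by the displayed computation preceding the theorem: applying Theorem~\ref{thm:combine} with $F_1=\dots=F_N=F$, all $n_k=1$, $m=N-1$, and reinterpreting $N$ as the sample size $n+1$ gives
\[
\frac{1}{(n+1)\,{\rm var}(t_{n+1})}\;\geq\;\frac{1}{n\,{\rm var}(t_n)},
\]
which is \eqref{eq:17}. So the real content of what remains is the equality case: for $n\geq 2$, equality in \eqref{eq:17} forces $F$ to be Gaussian (and conversely, Gaussianity makes $n\,{\rm var}(t_n)$ constant, which is immediate since for a Gaussian sample $t_n=\bar x$ and ${\rm var}(\bar x)=\sigma^2/n$).

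The plan is to trace equality back through the two inequalities used in the proof of Theorem~\ref{thm:combine}. That proof combines (i) the variance drop lemma (Lemma~\ref{lem:vardrop}) applied to $\psi_{\bf s}=t_{n({\bf s})}^{(\bf s)}$ with the optimal weights $w_{\bf s}\propto 1/{\rm var}(t_{n({\bf s})}^{(\bf s)})$, and (ii) the optimality of the Pitman estimator, i.e. ${\rm var}(\sum_{\bf s}w_{\bf s}t_{n({\bf s})}^{(\bf s)})\geq {\rm var}(t_n^{(1,\dots,N)})$. Equality in \eqref{eq:17} requires equality in both. First I would use the equality clause of Lemma~\ref{lem:vardrop}: equality holds iff each $\psi_{\bf s}=t_{n({\bf s})}^{(\bf s)}$ is additively decomposable, $\psi_{\bf s}(\mathbf{x}_{\bf s})=\sum_{i\in{\bf s}}\psi_{{\bf s}i}(x_i)$. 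In the present symmetric setup ($N=n+1$ singletons pooled into $(N-1)$-samples) this says: the Pitman estimator based on $n$ i.i.d.\ observations is a sum of functions of the individual observations. By symmetry and equivariance, such an additive estimator must be $\frac{1}{n}\sum x_i=\bar x$ (the only additive equivariant estimator, up to the choice of the per-coordinate functions, is the sample mean). Thus equality forces $t_n=\bar x$ for a sample of size $n$ (and hence, by the monotonicity direction, for all sizes $\geq n$).

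The final step is the classical characterization: if the Pitman estimator of a location parameter from an i.i.d.\ sample of size $n\geq 2$ equals the sample mean $\bar x$, then $F$ is Gaussian. This is where I expect the main work to lie, though it is a known result (it is the finite-sample analog of the fact that the sample mean is efficient only in the Gaussian case; see e.g.\ Kagan's earlier work). One route: $t_n=\bar x$ means $E(\bar x\mid R)=0$ in \eqref{eq:2}, i.e.\ $\bar x$ is uncorrelated with — indeed has zero conditional expectation given — the residual vector $R=(x_1-\bar x,\dots,x_n-\bar x)$; combined with a second application (using the pooled-sample structure, or differentiating a characteristic-function identity) this yields a functional equation on $F$ whose only solution with finite variance is the normal. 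Alternatively, one can invoke the equality case of the superadditivity/Stam-type chain already developed: $t_n=\bar x$ for size $n$ propagates (via the convolution structure, since a Gaussianizing argument applies to $F\ast F$) to an identity of the type \eqref{eq:A3}, and then Theorem~\ref{thm:char} — or the Cramér-type decomposition argument behind it — gives normality. I would present the self-contained conditional-expectation argument: from $E(\bar x|R)=0$ one gets, for every bounded measurable $\varphi$, $E[\bar x\,\varphi(R)]=0$; choosing $\varphi$ built from exponentials $e^{i\langle u,R\rangle}$ and using independence of the $x_i$ produces, after elementary manipulation, that the characteristic function $\hat f$ satisfies $(\log\hat f)''\equiv\text{const}$, i.e.\ $F$ is normal. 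The obstacle is purely in making this last deduction rigorous under only a finite-second-moment hypothesis (no density assumed), so I would either reduce to the density case by the smoothing trick used in Section~\ref{sec:char} or appeal directly to the cited characterization theorems.
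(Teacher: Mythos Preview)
Your approach is exactly the paper's: the inequality is obtained by specializing Theorem~\ref{thm:combine} (the paper does this in the displayed computation just before the theorem), and the equality case is handled via the equality condition of Lemma~\ref{lem:vardrop}, which gives additive decomposability of the leave-one-out estimators $t_n$, whence by equivariance and symmetry $t_n=\bar x$. The paper records precisely this argument in Remark~1 following the theorem and then, like you, appeals to the classical characterization without reproving it.

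One correction is needed. The characterization ``$t_n=\bar x$ is the Pitman estimator $\Rightarrow$ $F$ Gaussian'' requires $n\geq 3$, not $n\geq 2$: for any symmetric $F$ one has $E(\bar x\mid x_1-x_2)=0$ and hence $t_2=\bar x$. Your parenthetical ``and hence, by the monotonicity direction, for all sizes $\geq n$'' does not follow---monotonicity of $n\,{\rm var}(t_n)$ gives an inequality, not propagation of $t_n=\bar x$ to larger samples. The clean fix for the $n=2$ case of the theorem is to use the \emph{second} equality you already isolated, namely equality in the Pitman-optimality step ${\rm var}\big(\sum_{\bf s}w_{\bf s}t_{n}^{(\bf s)}\big)\geq {\rm var}(t_{n+1})$: once each $t_n^{(\bf s)}=\bar x_{\bf s}$, the left side is ${\rm var}(\bar x_{n+1})$, so equality forces $t_{n+1}=\bar x$ on a sample of size $n+1\geq 3$, and the classical Kagan characterization then applies. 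The paper's Remark~1 is equally terse on this point, so your write-up should make this step explicit.
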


{\it Remarks:}
\begin{enumerate}
\item If $F$ is Gaussian $N(0,\sigma^2)$, then clearly $n{\rm var}(t_n)=\sigma ^2$ for all $n$.
In fact, the equality    \[n{\rm var}(t_n)=(n+1){\rm var}(t_{n+1})\]
holding for any $n\geq 2$ characterizes the Gaussian distribution since it implies the
additive decomposability of $t_n$. If an equivariant estimator is additively decomposable,
it is linear and due to the symmetry of $t_n$ one has $t_n=\bar x$.

\item The condition of Theorem~\ref{thm:monot} is fulfilled for $m=1$ (and thus for any $m$) if $\int x^{2}dF(x)<\infty$.
However, for many $F$ with infinite second moment (e.g., Cauchy), ${\rm var}(t_m)<\infty$ for some $m$
and Theorem~\ref{thm:monot} holds.

\item Note that even absolute continuity of $F$ is not required, not to mention the finiteness of the Fisher information.

\item If $F$ is the distribution function of an exponential distribution with parameter $1/\lambda$,
\[n{\rm var}(t_n)=\frac{2\lambda n}{(n+1)(n+2)}.\]
If $F$ is the distribution function of a uniform distribution on $(-1,\:1)$,
\[n{\rm var}(t_n)=\frac{4n^2}{(n+1)^{2}(n+2)}.\]
In these examples, the Fisher information is infinite, but one clearly has monotonicity.

\item One can call $F$ Pitman regular if
\be
\lim_{n\rightarrow \infty}n{\rm var}(t_n)>0 \label{eq:18}
\ee
and nonregular if the limit in (\ref{eq:18}) (that always exists) is zero.
As mentioned earlier, Ibragimov and Has'minskii \cite{IH81:book} showed that under rather mild conditions on $F$
that include the finiteness of the Fisher information $I$,
\[\lim_{n\rightarrow \infty}n{\rm var}(t_n)=1/I.\]
Under these conditions, Theorem~\ref{thm:monot} implies monotone convergence
of $n\,{\rm var}(t_n)$ to its limit.
\end{enumerate}

A corollary of Theorem~\ref{thm:monot} is worth mentioning.

\begin{corollary}\label{cor:wt-combine}
Let  $(x_1,\ldots,x_{n+m})$, $m+n\geq 3$ be a sample from the population
$F(x-\theta)$ with finite variance. If $t_m$ is the Pitman estimator of $\theta$ from the first
$m$ and $t_n$ from the last $n$ observations, then
\[ t_{n+m}=w_1t_n+w_2t_m\] for  some $w_1,\:w_2$
if and only if $F$ is Gaussian.
\end{corollary}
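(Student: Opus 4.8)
The plan is to convert the assumed identity $t_{n+m}=w_1t_n+w_2t_m$ into an equality in the superadditivity bound \eqref{eq:4.2} (for $N=2$ and sample sizes $m,n$ from the common population $F$), and then, using the monotonicity of $n\,\mathrm{var}(t_n)$, into an equality in \eqref{eq:17} at some sample size $\geq 2$; by Theorem~\ref{thm:monot} the latter forces $F$ to be Gaussian.

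For the ``if'' direction, recall (as in the first remark following Theorem~\ref{thm:monot}) that when $F$ is Gaussian the Pitman estimator from any sample is the sample mean, shifted by the mean of $F$. Hence $t_{n+m}=\frac{m}{m+n}\,t_m+\frac{n}{m+n}\,t_n$, so the choice $w_1=\frac{n}{m+n}$, $w_2=\frac{m}{m+n}$ works, and the ``if'' part is trivial.

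For the ``only if'' direction I would argue in three steps. First, using the equivariance of $t_{n+m},t_n,t_m$ (replace every observation by observation plus $c$) the assumed identity forces $w_1+w_2=1$. Second, since $t_m$ is built from $x_1,\dots,x_m$ and $t_n$ from $x_{m+1},\dots,x_{m+n}$, the two are independent, so $\mathrm{var}(t_{n+m})=w_1^2\mathrm{var}(t_n)+w_2^2\mathrm{var}(t_m)$; but $u\,t_n+v\,t_m$ is an equivariant estimator of $\theta$ from the full sample for every $u+v=1$, and $t_{n+m}$ is the minimum variance equivariant estimator, so $\mathrm{var}(t_{n+m})\le\min_{u+v=1}\{u^2\mathrm{var}(t_n)+v^2\mathrm{var}(t_m)\}=\bigl(1/\mathrm{var}(t_m)+1/\mathrm{var}(t_n)\bigr)^{-1}$. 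Since $(w_1,w_2)$ also satisfies $w_1+w_2=1$, these two bounds squeeze together and $1/\mathrm{var}(t_{n+m})=1/\mathrm{var}(t_m)+1/\mathrm{var}(t_n)$, i.e.\ equality in \eqref{eq:4.2}. Third, write $v_k=k\,\mathrm{var}(t_k)$, which is finite (as $F$ has finite variance) and non-increasing in $k$ by \eqref{eq:9}; the equality just obtained reads $\frac{m+n}{v_{m+n}}=\frac{m}{v_m}+\frac{n}{v_n}$, and since $v_{m+n}\le v_m$ and $v_{m+n}\le v_n$ the right-hand side is at most $\frac{m+n}{v_{m+n}}$, with equality only when $v_m=v_n=v_{m+n}$. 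Monotonicity then pins $v_k$ to be constant for $\min(m,n)\le k\le m+n$; as $m+n\ge 3$, this interval contains the consecutive integers $m+n-1$ and $m+n$ with $m+n-1\ge 2$, so $(m+n-1)\mathrm{var}(t_{m+n-1})=(m+n)\mathrm{var}(t_{m+n})$, which is the equality case of \eqref{eq:17} at index $m+n-1\ge 2$. By Theorem~\ref{thm:monot}, $F$ is Gaussian.

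The step I expect to be the main obstacle is this last reduction: one must combine the two distinct monotonicities — in the number of pooled samples (underlying \eqref{eq:4.2}) and in the sample size (underlying \eqref{eq:17}) — and check that the interval on which $v_k$ is forced constant really contains a consecutive pair of indices both $\ge 2$. This is exactly where the hypothesis $m+n\ge 3$ enters and why the equality characterization of \eqref{eq:17} can be invoked at an index $\ge 2$ rather than at the uncharacterized index $n=1$.
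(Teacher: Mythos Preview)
Your proof is correct and follows essentially the same approach as the paper: compute $\mathrm{var}(t_{m+n})$ from the assumed linear representation and independence of $t_m,t_n$, squeeze using the monotonicity of $k\,\mathrm{var}(t_k)$ from Theorem~\ref{thm:monot} to obtain $m\,\mathrm{var}(t_m)=n\,\mathrm{var}(t_n)=(m+n)\,\mathrm{var}(t_{m+n})$, and then invoke the equality case of \eqref{eq:17} at an index $\geq 2$. The only cosmetic difference is that the paper asserts the specific values $w_1=m/(m+n)$, $w_2=n/(m+n)$ up front and computes directly, whereas you bypass pinning down the weights by passing through the minimum of $u^2\mathrm{var}(t_n)+v^2\mathrm{var}(t_m)$ over $u+v=1$, landing at equality in \eqref{eq:4.2}; your route is arguably cleaner, since the paper's ``one can easily see'' for the weights is not entirely obvious without an orthogonality or optimality argument of exactly the kind you supply.
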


\begin{proof}
One can easily see that necessarily $w_1=m/(m+n),\:w_2=n/(m+n)$ so that
\begin{eqnarray*}
{\rm var}(t_{m+n})=\left(\frac{m}{m+n}\right)^2{\rm var}(t_m)+\left(\frac{n}{m+n}\right)^2{\rm var}(t_n)\\
\geq \left(\frac{m}{m+n}\right){\rm var}(t_{m+n})+\left(\frac{n}{m+n}\right){\rm var}(t_{m+n})={\rm var}(t_{m+n}),
\end{eqnarray*}
the equality sign holding if $(m+n){\rm var}(t_{m+n})=m{\rm var}(t_{m})=n{\rm var}(t_{n})$.
\end{proof}

We can now characterize equality for another special case of Theorem~\ref{thm:combine}.

\begin{corollary}\label{cor:combine-char}
Let $t_m$ be the Pitman estimator from a sample of size $m$ from $F(x-\theta)$.
Then one has superadditivity with respect to the sample size,
\be
\frac{1}{{\rm var}(t_n)}\geq \frac{1}{{\rm var}(t_{n_1})}+\ldots+\frac{1}{{\rm var}(t_{n_N})},\:n=n_1+\ldots+n_N, \label{eq:4.3}
\ee
with equality if and only if $F$ is Gaussian.
\end{corollary}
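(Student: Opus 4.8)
\textbf{Proof proposal for Corollary~\ref{cor:combine-char}.}
The inequality \eqref{eq:4.3} is the special case of Theorem~\ref{thm:combine} obtained by taking $F_1=\ldots=F_N=F$, $m=1$, and letting $n_k$ be arbitrary; indeed, with $m=1$ the subsets $\bf s$ are singletons, ${N-1\choose m-1}=1$, and $t_{n(\bf s)}^{(\bf s)}$ becomes the Pitman estimator $t_{n_k}$ from a sample of size $n_k$ from $F(x-\theta)$, so \eqref{eq:4.1} reads exactly as \eqref{eq:4.3}. So the only work is the equality characterization, and for that I would reduce to the two-block case $N=2$ (if equality holds for $N$ blocks it holds for each adjacent pair, by grouping), i.e.\ to the statement that
\[
\frac{1}{{\rm var}(t_{m+n})}=\frac{1}{{\rm var}(t_m)}+\frac{1}{{\rm var}(t_n)}
\]
forces $F$ Gaussian.

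The plan is to trace through the proof of Theorem~\ref{thm:combine} and see what equality costs. Two inequalities were used: the variance drop Lemma~\ref{lem:vardrop} (with the optimal weights $w_{\bf s}\propto\pi_{\bf s}$), and the fact that the weighted combination $\sum_{\bf s}w_{\bf s}t_{n(\bf s)}^{(\bf s)}$ has variance at least that of the Pitman estimator $t_n^{(1,\ldots,N)}$. Equality in the latter forces, by uniqueness of the Pitman estimator, that $t_{m+n}=w_1 t_m+w_2 t_n$ with $w_1=\mathrm{var}(t_n)/(\mathrm{var}(t_m)+\mathrm{var}(t_n))$ and $w_2=\mathrm{var}(t_m)/(\mathrm{var}(t_m)+\mathrm{var}(t_n))$. (Alternatively, equality in Lemma~\ref{lem:vardrop} already forces the $t_{n(\bf s)}^{(\bf s)}$ to be additively decomposable into functions of the individual $x_i$, and an additively decomposable symmetric equivariant estimator is $\bar x$ — which is an even more direct route.) In any case we arrive at $t_{m+n}=w_1 t_m+w_2 t_n$ for some weights, which is precisely the hypothesis of Corollary~\ref{cor:wt-combine}. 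Since $m+n\ge 3$ whenever we have at least two blocks summing to $n\ge 3$ — and the case $n=2$, $N=2$ is $n_1=n_2=1$, where $t_2=w_1 t_1+w_2 t_1=\bar x$ need not characterize Gaussian, which is consistent with Theorem~\ref{thm:monot} only claiming the characterization for $n\ge 2$ via additive decomposability — I would invoke Corollary~\ref{cor:wt-combine} to conclude $F$ is Gaussian. Conversely, if $F$ is $N(0,\sigma^2)$ then $t_{n_k}=\bar x^{(k)}$ with ${\rm var}(t_{n_k})=\sigma^2/n_k$, and $\sum_k n_k/\sigma^2=n/\sigma^2=1/{\rm var}(t_n)$, so equality holds.

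The main obstacle is bookkeeping rather than a genuine difficulty: one must be careful that the reduction from $N$ blocks to a single adjacent pair is legitimate (equality in a sum of superadditivity inequalities propagates to each piece because grouping blocks only decreases the right-hand side of \eqref{eq:4.3} — pooling $n_1,\ldots,n_j$ into one block replaces $\sum_{i\le j}1/{\rm var}(t_{n_i})$ by the smaller $1/{\rm var}(t_{n_1+\cdots+n_j})$), and that the boundary case $n_k=1$ for all $k$ with $N=2$ is handled by the $n\ge 3$ hypothesis implicit in wanting the Gaussian conclusion (for $N\ge 3$ singletons, pooling two of them already gives a block of size $2$ plus the rest, total $\ge 3$, so Corollary~\ref{cor:wt-combine} applies). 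I would also double-check that ${\rm var}(t_m)<\infty$ is inherited from ${\rm var}(t_n)<\infty$, which it is not in general for $m<n$ — but here finite variance of $F$ is assumed outright, so all the $t_{n_k}$ have finite variance and the displayed identities are meaningful.
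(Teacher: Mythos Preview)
Your proposal is correct and follows essentially the same route as the paper: specialize Theorem~\ref{thm:combine} to get the inequality, reduce the equality case to $N=2$, observe that equality forces $t_{m+n}=w_1 t_m+w_2 t_n$, and invoke Corollary~\ref{cor:wt-combine}. You are in fact more careful than the paper on two points: you correctly note that the weights arising from the proof of Theorem~\ref{thm:combine} are the inverse-variance weights (the paper writes $w_1=l/n$, $w_2=m/n$, which is only justified \emph{a posteriori}), and you handle the bookkeeping of reducing $N$ blocks to two and the small-sample edge cases, which the paper passes over.
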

\begin{proof}
Taking $F_1=\ldots=F_N=F$ in Theorem~\ref{thm:combine} immediately gives \eqref{eq:4.3}.
To understand when the equality sign holds in (\ref{eq:4.3}), suffice to consider the case of $N=2$.
Set $n_1=l,\:n_2=m,\:n=l+m$. The equality sign in
\[\frac{1}{{\rm var}(t_n)}\geq \frac{1}{{\rm var}(t_l)}+\frac{1}{{\rm var}(t_m)}\]
holds if and only if \[t_n=w_1t_l+w_2t_m,\:{\rm with}\:\:w_1=l/n,\:w_2=m/n.\]
According to Corollary~\ref{cor:wt-combine}, the last relation holds if and only if $F$ is Gaussian.
\end{proof}


Another corollary of interest that looks similar in form to Corollary~\ref{WhateverCor} of Section~\ref{sec:convolve} but is of a different nature,
follows immediately from combining Theorem~\ref{thm:monot} and the definition (\ref{eq:2}).

\begin{corollary} For independent identically distributed $X_1,X_2,\ldots$ with\\ ${\rm var}(X_i)=\sigma^2 <\infty$ set
\[\bar X_n=(X_1+\ldots+X_n)/n.\]
Then for any $n\geq 1$,
\[(n+1){\rm var}E(\bar X_{n+1}|X_1-\bar X_{n+1},\ldots, X_{n+1}-\bar X_{n+1})\geq n{\rm var}E(\bar X_{n}|X_1-\bar X_{n},\ldots, X_{n}-\bar X_{n}).\]
\end{corollary}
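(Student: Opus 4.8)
The plan is to translate the statement about ${\rm var}\{E(\bar X_n\mid\cdot)\}$ into a statement about $n\,{\rm var}(t_n)$ and then read off the inequality directly from the monotonicity already established in Theorem~\ref{thm:monot}.

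First I would set $R_n=(X_1-\bar X_n,\ldots,X_n-\bar X_n)$ and recall from the definition (\ref{eq:2}), evaluated at $\theta=0$, that $t_n=\bar X_n-E(\bar X_n\mid R_n)$. Applying the law of total variance to $\bar X_n$ conditioned on $R_n$ gives
\[
{\rm var}(\bar X_n)=E\{{\rm var}(\bar X_n\mid R_n)\}+{\rm var}\{E(\bar X_n\mid R_n)\}.
\]
Since $E(t_n)=E(\bar X_n)-E\{E(\bar X_n\mid R_n)\}=0$, the first term on the right equals $E[(\bar X_n-E(\bar X_n\mid R_n))^2]={\rm var}(t_n)$, while in the i.i.d.\ case ${\rm var}(\bar X_n)=\sigma^2/n$. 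Rearranging and multiplying through by $n$ yields the key identity
\[
n\,{\rm var}\{E(\bar X_n\mid R_n)\}=\sigma^2-n\,{\rm var}(t_n).
\]

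With this identity in hand the corollary is essentially immediate. Because ${\rm var}(X_1)=\sigma^2<\infty$ forces ${\rm var}(t_1)=\sigma^2<\infty$, Theorem~\ref{thm:monot} applies and gives $n\,{\rm var}(t_n)\geq(n+1)\,{\rm var}(t_{n+1})$ for all $n\geq 2$; hence $\sigma^2-n\,{\rm var}(t_n)\leq\sigma^2-(n+1)\,{\rm var}(t_{n+1})$, and substituting the identity on both sides produces exactly
\[
(n+1)\,{\rm var}\{E(\bar X_{n+1}\mid R_{n+1})\}\geq n\,{\rm var}\{E(\bar X_n\mid R_n)\}.
\]
For $n=1$ the claim is trivial, since $R_1\equiv 0$ makes $E(\bar X_1\mid R_1)$ the constant $E(X_1)$, so the right-hand side is $0$ (consistently with the identity, which then reads $1\cdot 0=\sigma^2-\sigma^2$).

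I do not expect a genuine obstacle: all the real work is in Theorem~\ref{thm:monot}. The only points needing a little care are the bookkeeping of means in the law-of-total-variance step, so that $E\{{\rm var}(\bar X_n\mid R_n)\}$ is correctly identified with the \emph{variance} (and not merely the raw second moment) of $t_n$, and the degenerate endpoint $n=1$ noted above. If one also wanted the equality case, it would follow the same way: equality for some $n\geq 2$ would force $n\,{\rm var}(t_n)=(n+1)\,{\rm var}(t_{n+1})$, which by Theorem~\ref{thm:monot} holds only for Gaussian $F$.
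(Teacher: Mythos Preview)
Your proposal is correct and follows exactly the approach the paper indicates: the paper merely states that the corollary ``follows immediately from combining Theorem~\ref{thm:monot} and the definition (\ref{eq:2}),'' and you have supplied precisely those details, deriving the identity ${\rm var}\{E(\bar X_n\mid R_n)\}=\sigma^2/n-{\rm var}(t_n)$ (which the paper itself uses without comment in the proof of Theorem~\ref{thm:convolve}) and then reading off the inequality from the monotonicity of $n\,{\rm var}(t_n)$. Your separate handling of the degenerate case $n=1$ is an appropriate touch.
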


In the regular case when $\lim_{n\rightarrow \infty} n{\rm var}(t_n)=1/I$,
\[\lim_{n\rightarrow \infty} n{\rm var}E(\bar X_{n}|X_1-\bar X_{n},\ldots, X_{n}-\bar X_{n})=\sigma^2 -1/I.\]
It would be interesting to study the asymptotic behavior as $n\rightarrow\infty$ of the random variable
\[E(\sqrt{n}\bar X_n|X_1-\bar X_{n},\ldots, X_{n}-\bar X_{n}).\]

\section{Multivariate extensions}
\label{sec:multi}


An extension of Theorem~\ref{thm:convolve} to the multivariate case depends on a generalization of the variance drop
lemma (Lemma~\ref{lem:vardrop}) to the case of $s$-variate vector functions. Using the Cram\'er-Wold principle,
for an arbitrary vector $c\in\mathbb{R}^s$ and vector functions $\psi_{\mathbf{s}}=\psi_{\mathbf{s}}(\mathbf{X}_\mathbf{s})$, set
\[ \phi_\mathbf{s}(\mathbf{X}_\mathbf{s})=c^T\psi_\mathbf{s}(\mathbf{X}_\mathbf{s}). \]
Thus Lemma~\ref{lem:vardrop} implies
\[ c^T\mathrm{var}\{\sum_\mathbf{s}\psi_\mathbf{s}\}c\leq {N-1 \choose m-1}\sum_\mathbf{s}w_\mathbf{s}^2c^T\mathrm{var}\{\psi_\mathbf{s}\}c. \]
This is equivalent to
\[ \mathrm{var}\{\sum_\mathbf{s}\psi_\mathbf{s}\}\leq {N-1 \choose m-1}\sum_\mathbf{s}w_\mathbf{s}^2\mathrm{var}\{\psi_\mathbf{s}\}, \]
where var means the covariance matrix; hence Lemma~\ref{lem:vardrop} holds in the multivariate case if we interpret the
inequality in terms of the Loewner ordering.

In Theorem~\ref{thm:convolve}, if $X_1,\ldots,X_n$ are independent $s$-variate random vectors with
distribution $F(x-\theta),\:x,\theta\in\mathbb{R}^s$, all the results and the proof remain true where an
inequality $A\geq B$ for matrices $A,\:B$ means, as usual, that the matrix $A-B$ is non-negative definite.


Corollary~\ref{cor:combine-char} remains valid in the multivariate case when the above samples come from $s$-variate
populations depending on $\theta\in{\mathbb R}^s$ assuming that the covariance matrices of the involved
Pitman estimators are nonsingular. The latter condition is extremely mild. Indeed, if the covariance matrix $V$ of the Pitman estimator
$\tau_n$ from a sample of size $n$ from an $s$-variate population $H(x-\theta)$ is singular, then for a nonzero
(column) vector
$a\in{\mathbb R}^s$
\[{\rm var}(a'\tau_n)=a'Va=0,\]
(prime stands for transposition) meaning that the linear function $a'\theta$ is estimatable with zero variance.
This implies that any two distributions in ${\mathbb R}^{ns}$ generated by samples of size $n$ from $F(x-\theta_1)$
and $F(x-\theta_2)$ with $a'\theta_1\neq a'\theta_2$ are mutually singular and so are the measures in ${\mathbb R}^s$
with distribution functions $F(x-\theta_1)$ and $F(x-\theta_2)$. Since for any $\theta_1$ there exists an arbitrarily close to it $\theta_2$ with $a'\theta_1\neq a'\theta_2$, singularity of the covariance matrix of the Pitman estimator
would imply an extreme irregularity of the family $\{F(x-\theta),\:\theta\in {\mathbb R}^s\}$.
In the multivariate case (\ref{eq:4.2}) takes the form of
\be
V^{-1}(t_{n}^{(1,\ldots,N)})\geq V^{-1}(t_{n_1}^{(1)})+\ldots+
V^{_1}(t_{n_N}^{(N)})\label{eq:4.4}
\ee
where $V(t)$ is the covariance matrix of a random vector $t$. To prove (\ref{eq:4.4}), take matrix-valued weights
\be
W_k=\left(V^{-1}(t_{n_1}^{(1)})+\ldots+V^{-1}(t_{n_N}^{(N)})\right)^{-1}V^{-1}(t_{n_k}^{(k)}),\:k=1,\ldots,N.\label{eq:4.5}
\ee
Since  $W_1+\ldots+W_N$ is the identity matrix, $W_{1}t_{n_1}^{(1)}+\ldots+W_{N}t_{n_N}^{(N)}$ is an equivariant
estimator of $\theta$ so that its covariance matrix exceeds that of the Pitman estimator,
\[V(t_{n}^{(1,\ldots,N)})\leq V \left(W_{1}t_{n_1}^{(1)}+\ldots+W_{N}t_{n_N}^{(N)}\right)=W_1V(t_{n_1}^{(1)})W'_1+\ldots+
W_NV(t_{n_N}^{(N)})W'_N.\]
Substituting the weights (\ref{eq:4.5}) into the last inequality gives (\ref{eq:4.4}).


If $(x_1,\ldots,x_n)$ is a sample from the multivariate population $F(x-\theta)$ (where both $x$ and $\theta$ are vectors),
the monotonicity of Theorem~\ref{thm:monot} holds for the covariance matrix $V_n$ of the Pitman estimator, i.e.,
\[nV_n\geq (n+1)V_{n+1}.\]
The proof is the same as that of the univariate case, but uses the multivariate version of Lemma~\ref{lem:vardrop}
discussed at the beginning of this section.

\section{Extensions to polynomial Pitman estimators}
\label{sec:poly}

Assuming
\be
\int x^{2k}dF(x)<\infty \label{eq:19}
\ee
for some integer $k\geq 1$, the polynomial Pitman
estimator $\hat{t}_{n}^{(k)}$ of degree $k$ is, by definition, the minimum variance equivariant
polynomial estimator (see Kagan \cite{Kag66}). Let $M_k=M_{k}(x_1-\bar x,\ldots,x_n-\bar x)$ be the
space of all polynomials of degree $k$ in the residuals.
Also, let $\hat E(\cdot|M_k)$ be the projection into $M_k$ in the (finite-dimensional) Hilbert space of
polynomials in $x_1,\ldots,x_n$ of degree $k$ with the standard inner product
\[(q_1,q_2)=E(q_1q_2) .\]
Then the polynomial Pitman estimator can be represented as
\be
\hat{t}_{n}^{(k)}=\bar x-\hat E(\bar x|M_k).\label{eq:12}
\ee
Plainly, it depends only on the first $2k$ moments of $F$.


To extend our earlier results to the polynomial Pitman estimators $\hat{t}^{(k)}_n$ under the
assumption $\int x^{2k}dF(x)<\infty$, the following properties of the projection operators are useful:
\begin{enumerate}
\item For any index set $\mathbf{s}$,
\[M_k(R_n)=M_k(R_\mathbf{s}+R_{\bar{\mathbf{s}}})\subset M_k(R_\mathbf{s},R_{\bar{\mathbf{s}}})\]
so that for any random variable $\xi$
\[ {\rm var}\{\hat{E}(\xi|M_k(R_n))\}\leq {\rm var}\{\hat{E}(\xi|M_k(R_\mathbf{s},R_{\bar{\mathbf{s}}}))\}. \]

\item Let $\xi$ be a random variable such that the pair $(\xi,R_\mathbf{s})$ is independent
(actually, suffice to assume uncorrelatedness) of $R_{\bar{\mathbf{s}}}$, then
\[ \hat{E}(\xi|M_k(R_\mathbf{s},R_{\bar{\mathbf{s}}}))=\hat{E}(\xi|M_k(R_\mathbf{s})). \]
\end{enumerate}

Substituting the conditional expectations in the proof of Theorem~\ref{thm:convolve} by the projection operators $\hat{E}(\cdot|M_k)$,
the following  version of Theorem~\ref{thm:convolve} for polynomial Pitman estimators can be proved.

\begin{theoremprime}
If for some integer $k\geq 1$, $\int x^{2k}dF_j(x)<\infty$, $j=1,\ldots,N$, the variance of the
polynomial Pitman estimators $\hat t^{(k)}_{{\bf s},n}$ satisfy the inequality
\[
{\rm var}(\hat t^{(k)}_n)\geq \frac{1}{{N-1\choose m-1}}\sum_{\bf s}{\rm var}(\hat t^{(k)}_{{\bf s},n}).
 \]
\end{theoremprime}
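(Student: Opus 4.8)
The plan is to mimic the proof of Theorem~\ref{thm:convolve} verbatim, replacing every conditional expectation $E(\cdot\,|\,R)$ by the linear projection $\hat E(\cdot\,|\,M_k(R))$ onto the space of polynomials of degree $\le k$ in the residuals. Recall from \eqref{eq:12} that $\hat t^{(k)}_{{\bf s},n}=\bar x_{\bf s}-\hat E(\bar x_{\bf s}\,|\,M_k(R_{\bf s}))$ and $\hat t^{(k)}_n=\bar x-\hat E(\bar x\,|\,M_k(R))$, so exactly as in Theorem~\ref{thm:convolve} one writes
\[
{\rm var}(\hat t^{(k)}_n)=\sigma^2/n-{\rm var}\{\hat E(\bar x\,|\,M_k(R))\},\qquad
(1/r)\sum_{\bf s}{\rm var}(\hat t^{(k)}_{{\bf s},n})=\sigma^2/n-(1/r)\sum_{\bf s}{\rm var}\{\hat E(\bar x_{\bf s}\,|\,M_k(R_{\bf s}))\},
\]
with $r=\binom{N-1}{m-1}$, using that each index $k$ lies in exactly $r$ of the sets $\bf s$. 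So the claimed inequality is equivalent to
\[
{\rm var}\{\hat E(\textstyle\sum_k\bar x_k\,|\,M_k(R))\}\le \frac{1}{r}\sum_{\bf s}{\rm var}\{\hat E(\bar x_{\bf s}\,|\,M_k(R_{\bf s}))\},
\]
which is the polynomial analogue of \eqref{SumVarianceEquiv}.

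The proof of this has the same two ingredients. First, set $\phi_{\bf s}=\hat E(\bar x_{\bf s}\,|\,M_k(R_{\bf s}))$; since $R_{\bf s}$ depends only on ${\bf x}_k$, $k\in{\bf s}$, this $\phi_{\bf s}$ is a function of $({\bf x}_k)_{k\in{\bf s}}$, so the variance drop lemma (Lemma~\ref{lem:vardrop}) with uniform weights $w_{\bf s}=\binom{N}{m}^{-1}$ gives
\[
r\sum_{\bf s}{\rm var}\{\phi_{\bf s}\}\ \ge\ {\rm var}\Big\{\sum_{\bf s}\phi_{\bf s}\Big\},
\]
the analogue of \eqref{eq:007}. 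Second, one needs $\sum_{\bf s}\phi_{\bf s}$ to dominate (in variance) its projection chain down to $M_k(R)$, i.e.\ the analogue of \eqref{SumVarianceFinal}: here I would invoke property~1 listed before the statement (the inclusion $M_k(R)=M_k(R_{\bf s}+R_{\bar{\bf s}})\subset M_k(R_{\bf s},R_{\bar{\bf s}})$, so projecting onto the smaller space does not increase variance) together with property~2 (since $(\bar x_{\bf s},R_{\bf s})$ is independent of $R_{\bar{\bf s}}$, one has $\hat E(\bar x_{\bf s}\,|\,M_k(R_{\bf s},R_{\bar{\bf s}}))=\hat E(\bar x_{\bf s}\,|\,M_k(R_{\bf s}))=\phi_{\bf s}$). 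Then $\hat E(\phi_{\bf s}\,|\,M_k(R))=\hat E(\bar x_{\bf s}\,|\,M_k(R))$ by the tower property for nested projections, and summing over $\bf s$, exactly as in \eqref{SumVarianceFinal}, $\sum_{\bf s}\hat E(\bar x_{\bf s}\,|\,M_k(R))=\hat E(r\sum_k\bar x_k\,|\,M_k(R))$, whose variance is $r^2{\rm var}\{\hat E(\bar x\,|\,M_k(R))\}$. Combining the two displays yields the equivalent inequality above, hence the theorem.

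The one genuinely delicate point — and the step I expect to be the main obstacle — is the replacement of Lemma~\ref{lem:cond-exp} (the conditioning identity for independent $\sigma$-algebras) by property~2 for \emph{linear} projections. For true conditional expectations, independence of $(\xi,\eta_1)$ from $\eta_2$ forces $E(\xi\,|\,\eta_1,\eta_2)=E(\xi\,|\,\eta_1)$; for $L^2$-projections onto polynomial spaces the corresponding statement is that $\hat E(\xi\,|\,M_k(\eta_1,\eta_2))=\hat E(\xi\,|\,M_k(\eta_1))$ whenever $(\xi,\eta_1)$ is uncorrelated with $\eta_2$ in the relevant degree sense, and one must check that the ambient Hilbert space $M_k(R_{\bf s},R_{\bar{\bf s}})$ decomposes appropriately — i.e.\ that the residual after projecting $\xi$ onto $M_k(R_{\bf s})$ is orthogonal to all of $M_k(R_{\bf s},R_{\bar{\bf s}})$. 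This is precisely what property~2 asserts, so strictly speaking the obstacle has been isolated into that stated property; what remains is to verify that applying it to $\xi=\bar x_{\bf s}$, $\eta_1=R_{\bf s}$, $\eta_2=R_{\bar{\bf s}}$ is legitimate (the required independence of $(\bar x_{\bf s},R_{\bf s})$ and $R_{\bar{\bf s}}$ follows since these depend on the disjoint blocks $({\bf x}_k)_{k\in{\bf s}}$ and $({\bf x}_k)_{k\in\bar{\bf s}}$), and that the nesting/tower step $\hat E(\hat E(\cdot\,|\,M_k(R_{\bf s},R_{\bar{\bf s}}))\,|\,M_k(R))=\hat E(\cdot\,|\,M_k(R))$ is valid, which it is because $M_k(R)\subset M_k(R_{\bf s},R_{\bar{\bf s}})$ and projections onto nested subspaces compose. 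Everything else is a mechanical transcription of the proof of Theorem~\ref{thm:convolve}.
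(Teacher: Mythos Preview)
Your proposal is correct and follows exactly the approach the paper indicates: substitute the conditional expectations in the proof of Theorem~\ref{thm:convolve} by the projection operators $\hat E(\cdot\,|\,M_k)$, using properties~1 and~2 listed just before the statement in place of the $\sigma$-algebra inclusion and Lemma~\ref{lem:cond-exp} respectively. The paper gives no more detail than this, and your write-up fills in the steps (variance drop lemma, property~2 to pass from $M_k(R_{\bf s})$ to $M_k(R_{\bf s},R_{\bar{\bf s}})$, property~1 plus the tower property for nested projections to descend to $M_k(R)$) precisely as intended.
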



Assuming that for some integer $m\geq 1$
\[\int x^{2m}dF_{k}(x)<\infty,\:k=1,\ldots,N,\]
Corollary~\ref{cor:combine-char} also easily extends 
to the polynomial Pitman estimators of degree $m$.

Similarly, under the condition \eqref{eq:19} for some integer $k\geq 1$, the
Theorem~\ref{thm:monot} extends to the polynomial Pitman estimator $\hat t_{n}^{(k)}$ defined in (\ref{eq:12}).
The polynomial Pitman estimator $\hat t_{n,j}^{(k)}$ of degree $k$ from
$(x_1,\ldots,x_{j-1},x_{j+1},\ldots,x_n)$ is equidistributed with $\hat t_{n-1}^{(k)}$ and thus
\[{\rm var}(\hat t_{n,j}^{(k)})={\rm var}(\hat t_{n-1}^{(k)}).\]
The estimator $(1/n)\sum_{1}^{n}\hat t_{n,j}^{(k)}$ is equivariant  (for sample $(x_1,\ldots,x_n)$) and since $\hat t_n^{(k)}$
is the polynomial Pitman estimator,
\be
{\rm var}(\hat t_{n}^{(k)})\leq (1/n^2){\rm var}(\sum_{1}^{n} \hat t_{n,j}^{(k)}).\label{eq:15}
\ee
By the $m=N-1$ special case of the variance drop lemma, 
\be
{\rm var}(\sum_{1}^{n} \hat t_{n,j}^{(k)})\leq (n-1)\sum_{1}^{n}{\rm var}(\hat t_{n,j}^{(k)})=n(n-1){\rm var}(\hat t_{n-1}^{(k)}).\label{eq:16}
\ee
Combining the last two inequalities gives
\be
(n+1){\rm var}(\hat t_{n+1}^{(k)})\leq n{\rm var}(\hat t_{n}^{(k)}),\label{eq:016}
\ee
i.e., $n{\rm var}(\hat t_{n}^{(k)})$ decreases with $n$.

In Kagan {\it et. al.} \cite{KKF74} it is shown that under only the moment condition (\ref{eq:19}),
$n{\rm var}(\hat t_{n}^{(k)})\ra 1/I^{(k)}$
as $n\ra\infty$ where $I^{(k)}$ can be interpreted as the Fisher information on $\theta$
contained in the first $2k$ moments of $F$ (see Kagan \cite{Kag76}). For any increasing sequence $k(n)$,
one sees that $n{\rm var}(\hat t_n^{k(n)})$ decreases with $n$, and the limit can be equal to $1/I$
under some additional conditions. Indeed, if the span of all the polynomials in $X$ with distribution function $F$
coincides with $L^{2}(F)$, the space of all square integrable functions of $X$,
then $I^{(k)}\rightarrow I$ as $k\rightarrow \infty$.


The above proof of monotonicity is due to the fact that the classes where $t_n$ and $t_{n}^{(k)}$ are the best are rather large. To illustrate this, consider the following analog of $t_{n}^{(k)}$:
\[\tau_{n}^{(k)}=\bar x-\hat E(\bar x|1,m_2,\ldots,m_k)=\bar x -\sum_{j=0}^{k}a_{j,n}m_j\]
where $m_j=(1/n)\sum_{1}^{n}(x_i-\bar x)^j$ and $\hat E(\bar x|1,m_2,\ldots,m_k)$ is the projection of $\bar x$ into the
space span$(1,m_2,\ldots,m_k)$ (i.e., the best mean square approximation of $\bar x$ by linear combinations of
the sample central moments of orders up to $k$). As shown in Kagan {\it et. al.} \cite{KKF74}, if
$\int x^{2k}dF(x)<\infty$, the behavior of $\tau_{n}^{(k)}$ as $n\ra\infty$ is the same as of $t_{n}^{(k)}$:
\[\sqrt{n}(\tau_{n}^{(k)}-\theta)\stackrel{\rm dist}{\ra}Z^{(k)}\]
where $Z^{(k)}$ has a Gaussian distribution $N(0,1/I^{(k)})$ and $n{\rm var}(\tau_{n}^{(k)})\ra 1/I^{(k)}.$
However, it does not seem likely that (\ref{eq:016}) holds for $\tau_{n}^{(k)}.$

\section{Additive perturbations with a scale parameter}
\label{sec:perturb}

In this section the setup of a sample $(x_1,\ldots,x_n)$ from a population $F_{\lambda}(x-\theta)$ is considered where
\[ F_{\lambda}(x)=\int F(y)dG((x-y)/\lambda).\]
In other words, an observation $X$ with distribution function $F(x-\theta)$ is perturbed by an
independent additive noise $\lambda Y$ with $P(Y\leq y)=G(y).$

We study the behavior of the variance ${\rm var}(t_{n,\lambda})$, as a function of $\lambda$,
of the Pitman estimator of $\theta$ from a sample of size $n$
from $F_{\lambda}(x-\theta)$. For the so called self-decomposable $Y$, it is proved that ${\rm var}(t_{n,\lambda})$
behaves ``as expected'', i. e., monotonically decreases for $\lambda\in (-\infty,\:0)$ and increases for $\lambda\in
(0,\:+\infty)$.

They say that a random variable $Y$ is {\it self-decomposable} if for any $c\in (0,1)$, $Y$ is equidistributed with $cY+Z_c$, i.e.,
\be
Y\cong cY+Z_c , \label{eq:5.1}
\ee
where $Z_c$ is independent of $Y$.
If $f(t)$ is the characteristic function of $Y$, then (\ref{eq:5.1}) is equivalent to
\[f(t)=f(ct)g_{c}(t)\]
where $g_{c}(t)$ is a characteristic function. All random variables having stable distributions are self-decomposable.
A self-decomposable random variable is necessarily infinitely divisible. In Lukacs \cite{Luc70:book, Chapter 5} necessary and sufficient conditions are given for self-decomposability in terms of the L\'evy spectral function.

\begin{theorem}\label{thm:decompose}
Let $X$ be an arbitrary random variable with $E(X^2)<\infty$ and $Y$ a self-decomposable random variable
with $E(Y^{2})<\infty$ independent of $X$.
 Then the variance ${\rm var}(t_{n,\lambda})$ of the Pitman estimator
 of $\theta$ from a sample of size $n$ from $F_{\lambda}(x-\theta)$,
 is increasing in $\lambda$ on $(0,\infty)$ and decreasing on $(-\infty,0)$.
\end{theorem}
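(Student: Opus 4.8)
The plan is to reduce the claim to the superadditivity inequality \eqref{eq:4} (equivalently the $N=2$ case of the Corollary to Theorem~\ref{thm:convolve}), exactly as one reduces monotonicity of inverse Fisher information to the Stam inequality. Fix $0<\lambda_1<\lambda_2$. Since $Y$ is self-decomposable, writing $c=\lambda_1/\lambda_2\in(0,1)$ we have $Y\cong cY+Z_c$ with $Z_c$ independent of $Y$, and hence, scaling by $\lambda_2$,
\[
\lambda_2 Y\cong \lambda_1 Y+\lambda_2 Z_c .
\]
Therefore an observation from $F_{\lambda_2}(x-\theta)$ has the same distribution as $X+\lambda_1 Y+\lambda_2 Z_c+\theta$, i.e.\ it is distributed as the convolution of the ``$F_{\lambda_1}$-noise'' $X+\lambda_1 Y$ with the independent noise $\lambda_2 Z_c$. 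In the notation of Section~\ref{sec:convolve}, take $N=2$ with $F_1$ the distribution function of $X+\lambda_1 Y$ and $F_2$ the distribution function of $\lambda_2 Z_c$, so that $F_1\ast F_2$ is the distribution function of $X+\lambda_2 Y$, which is $F_{\lambda_2}$. Both have finite variance: $\operatorname{var}(X)<\infty$ and $\operatorname{var}(Y)<\infty$ are assumed, and $\operatorname{var}(Z_c)<\infty$ follows from $Y\cong cY+Z_c$. Applying \eqref{eq:4} (the case $m=1$, $N=2$ of Theorem~\ref{thm:convolve}) to the samples of size $n$ gives
\[
\operatorname{var}(t_{n,\lambda_2})\;\ge\;\operatorname{var}(t_{n,\lambda_1})+\operatorname{var}(t_{n}^{(2)})\;\ge\;\operatorname{var}(t_{n,\lambda_1}),
\]
which is precisely monotone increase on $(0,\infty)$. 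Monotone decrease on $(-\infty,0)$ follows by the symmetry $\lambda\leftrightarrow-\lambda$: replacing $Y$ by $-Y$ leaves self-decomposability and the second moment intact, and for negative parameters one runs the same argument with $|\lambda_1|<|\lambda_2|$, writing $|\lambda_2|\,(-Y)\cong |\lambda_1|\,(-Y)+|\lambda_2| Z_c$.

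One point that needs a remark rather than a computation is that $F_{\lambda}$ for $\lambda\ne 0$ is obtained from $F_0=F$ by convolving with a fixed law (the distribution of $\lambda Y$), so $\operatorname{var}(t_{n,\lambda})\le \operatorname{var}(t_{n,0}^{\text{noise-free}})$ is finite as soon as $\operatorname{var}(X)+\operatorname{var}(Y)<\infty$, and the quantity $\operatorname{var}(t_{n,\lambda})$ is well defined throughout; in particular \eqref{eq:4} is applicable with no extra regularity hypotheses, which is consistent with the ``finite variance only'' philosophy of the paper. It is also worth noting that the argument uses self-decomposability in an essential way: it is exactly the property that guarantees, for every pair $\lambda_1,\lambda_2$ of the same sign, that the larger-scale noise factors through the smaller-scale one as an \emph{independent} convolution, which is what \eqref{eq:4} requires.

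The main obstacle is not the inequality itself but making sure the reduction is set up cleanly: one must verify that $F_{\lambda_2}=F_1\ast F_2$ with $F_1=$ law of $X+\lambda_1Y$, using the independence of $Z_c$ from $Y$ (and hence of $\lambda_2 Z_c$ from the pair $(X,\lambda_1 Y)$, since $X$ is independent of everything), and that $t_{n,\lambda_1}$ in the statement really is the Pitman estimator ``$t'_n$'' of \eqref{eq:4} for the population $F_1(x-\theta)=F_{\lambda_1}(x-\theta)$. Both are immediate once the distributional identity $\lambda_2 Y\cong \lambda_1 Y+\lambda_2 Z_c$ is in hand. No equality analysis is required for this theorem, so the proof is complete after the two sign cases are handled.
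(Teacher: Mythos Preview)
Your argument is correct and is in fact cleaner than the paper's. Both proofs rest on the same distributional identity coming from self-decomposability, namely $\lambda_2 Y\cong\lambda_1 Y+\lambda_2 Z_c$ for $c=\lambda_1/\lambda_2$, so that $F_{\lambda_2}=F_{\lambda_1}\ast(\text{law of }\lambda_2 Z_c)$. You then invoke the superadditivity inequality \eqref{eq:4} as a black box and are done in one line. The paper, by contrast, does not cite \eqref{eq:4}; it unpacks the Pitman estimator via the representation $\mathrm{var}(t_{n,\lambda})=\mathrm{var}(\bar x+\lambda\bar y)-\mathrm{var}\{E(\bar x+\lambda\bar y\mid R)\}$, compares the $\sigma$-algebras generated by the residuals at the two scales, and uses Lemma~\ref{lem:cond-exp} to split off the $z_c$-part---effectively re-deriving the $N=2$, $m=1$ case of superadditivity inside this specific setup. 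Your route is more modular and makes the logical dependence on Section~\ref{sec:convolve} explicit; the paper's route is self-contained but longer.

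One small slip to fix: in your parenthetical remark you write $\mathrm{var}(t_{n,\lambda})\le \mathrm{var}(t_{n,0}^{\text{noise-free}})$, which is the wrong direction (and would contradict the theorem). Finiteness of $\mathrm{var}(t_{n,\lambda})$ is simply because the sample mean is equivariant with variance $(\mathrm{var}(X)+\lambda^2\mathrm{var}(Y))/n<\infty$. Also, in the negative-$\lambda$ case your index convention ``$|\lambda_1|<|\lambda_2|$'' is reversed if $\lambda_1<\lambda_2<0$; the argument is right, but state it as: for $\lambda<0$ write $\lambda Y=|\lambda|(-Y)$, note $-Y$ is self-decomposable, and apply the positive case to conclude $\mathrm{var}(t_{n,\lambda})$ increases in $|\lambda|$, hence decreases in $\lambda$ on $(-\infty,0)$.
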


\begin{proof}
If $x_1,\ldots,x_n,\:y_1,\ldots,y_n$ are independent random variables, the $x$'s  with distribution $F(x-\theta)$ and the $y$'s with distribution $G(y)$, then
\[t_{n,\lambda}=\bar x +\lambda \bar y-E(\bar x +\lambda \bar y|x_1-\bar x+\lambda (y_1-\bar y),\ldots,x_n-\bar x+\lambda (y_n-\bar y))\]
and
\[{\rm var}(t_{n,\lambda})={\rm var}(\bar x +\lambda \bar y)-{\rm var}\{E(\bar x +\lambda \bar y|x_1-\bar x+\lambda (y_1-\bar y),\ldots,x_n-\bar x+\lambda (y_n-\bar y))\}.\]
If $\lambda_2>\lambda_1>0,$ then $\lambda_1=c\lambda_2$ for some $c,\:0<c<1$.

Due to self-decomposability of $y_i$, there exist random variables $z_{c,1}\ldots,z_{c,n}$ such that
\be
y_i-\bar y\cong c(y_i-\bar y)+(z_{c,i}-\bar z_{c})\label{eq:5.2}
\ee
and the random variables $x_1,\ldots,x_n,y_1,\ldots,y_n,z_{c,1},\ldots, z_{c,n}$ are independent.\\
The $\sigma$-algebra
\begin{eqnarray*}
\lefteqn{\sigma(x_1-\bar x+\lambda_{2} (y_1-\bar y),\ldots,x_n-\bar x+\lambda_{2}(y_n-\bar y))=}\\
&& \sigma(x_1-\bar x+\lambda_{2}c (y_1-\bar y)+\lambda_{2}(z_{c,1}-\bar z_c),\ldots,\\
&& x_n-\bar x+\lambda_{2}c(y_n-\bar y)+\lambda_{2}c(y_n-\bar y)+\lambda_{2}(z_{c,n}-\bar z_c))
\end{eqnarray*}
is smaller than the $\sigma$-algebra
\[\sigma(x_1-\bar x+\lambda_{2}c (y_1-\bar y),\ldots,x_n-\bar x+\lambda_{2}c(y_n-\bar y),
z_{c,1}-\bar z_c,\ldots,z_{c,n}-\bar z_c)\]
and thus
\begin{eqnarray*}
\lefteqn{{\rm var}\{E(\bar x +\lambda_{2} \bar y|x_1-\bar x+\lambda_{2} (y_1-\bar y),\ldots,x_n-\bar x+\lambda_{2} (y_n-\bar y))\}\leq}\\
&& {\rm var}\{E(\bar x +\lambda_{2} \bar y|x_1-\bar x+\lambda_{2}c (y_1-\bar y),\ldots,\\
&& x_n-\bar x+\lambda_{2}c(y_n-\bar y),z_{c,1}-\bar z_c,\ldots,z_{c,n}-\bar z_c)\}.
\end{eqnarray*}
From (\ref{eq:5.2}) and Lemma~\ref{lem:cond-exp} in Section~\ref{sec:convolve} one can rewrite the right hand side of the above inequality
\begin{eqnarray}
&&{\rm var}\{E(\bar x +\lambda_{2} \bar y|x_1-\bar x+\lambda_{2}c (y_1-\bar y),\ldots,\nonumber\\
&& x_n-\bar x+\lambda_{2}c(y_n-\bar y),z_{c,1}-\bar z_c,\ldots,z_{c,n}-\bar z_c)\}=\nonumber\\
&& {\rm var}\{E(\bar x +\lambda_{2}c \bar y|x_1-\bar x+\lambda_{2}c (y_1-\bar y),\ldots,x_n-\bar x+\lambda_{2}c(y_n-\bar y)\}+ \nonumber \\
&& {\rm var}\{E(\lambda_{2}\bar z_c|z_{c,1}-\bar z_c,\ldots,z_{c,n}-\bar z_c).\}\label{eq:5.3}
\end{eqnarray}
Again due to (\ref{eq:5.2})
 \[{\rm var}(\bar x+\lambda_{2}\bar y)= {\rm var}(\bar x+\lambda_{2}c\bar y+\lambda_2\bar{z}_c).\]
Combining this with (\ref{eq:5.3}) and recalling that $c\lambda_{2}=\lambda_{1}$ leads to
\[{\rm var}(t_{n,\lambda_{2}})\geq {\rm var}(t_{n,\lambda_{1}}).\]
The case of $\lambda_1<\lambda_2<0$ is treated similarly.
\end{proof}

Theorem~\ref{thm:decompose} has a counterpart in terms of the Fisher information:
{\it Let $X,\:Y$ be independent random variables. If $Y$ is self-decomposable, then
$I(X+\lambda Y)$, as a function of $\lambda$, monotonically increases on $(-\infty,\:0)$ and decreases on $(0,\:+\infty)$.}

The proof is much simpler than that of Theorem~\ref{thm:decompose}. Let $0<\lambda_2=c\lambda_1$ with $0<c<1$. Then $X+\lambda_2 Y\cong
X+c\lambda_2 Y+\lambda_2 Z_c$ where $X,\:Y$ and $Z_c$ are independent and the claim follows from that for independent
random variables $\xi,\:\eta$, $I(\xi+\eta)\leq I(\xi)$. 

\section{Discussion}
\label{sec:discuss}
Few years ago Bulletin of the Institute of Mathematical Statistics published letters \cite{Gup08}, and \cite{Shi08}
whose authors raised a question of monotonicity in the sample size of risks of standard (``classical'') estimators.
Natural expectations are that under reasonable conditions the mean square error, say, of the maximum likelihood estimator
from a sample of size $n+1$ is less than from a sample of size $n$.\\
\\
In this paper a stronger property of the Pitman estimator $t_n$ of a location parameter is proved. Not only
${\rm var}(t_n)$ monotonically decreases in $n$ but ${\rm var}(t_{n+1})\leq\frac{n}{n+1}{\rm var}(t_n)$. However, for another 
equivariant estimator of a location parameter, that is asymptotically equivalent to $t_n$ and has a ``more explicit'' form than $t_n$,
\[\tilde t_n = \bar x -\frac{1}{nI}\sum_{1}^{n}J(x_i -\bar x)\]
where $J$ is the Fisher score and $I$ the Fisher information, monotonicity in $n$ of ${\rm var}(\tilde t_n)$ is an open question.
In a general setup, it is not clear what property of the maximum likelihood estimator is responsible for monotonicity of the risk when monotonicity
holds.\\
\\
In a recent paper \cite{Kag13} was proved monotonicity in the sample size of the length of some confidence intervals.
\\
It seems as a challenge to find out when it is worth to make an extra observation.

\end{document}